\theoremstyle{plain}
\newtheorem{Theorem}{Theorem}
\newtheorem{Corollary}[Theorem]{Corollary}
\newtheorem{Proposition}[Theorem]{Proposition}
\newtheorem{Lemma}[Theorem]{Lemma}
\renewcommand\l{\lambda}
\renewcommand\S{S(H)}
\newcommand\tr{\operatorname{Tr}}
\newcommand\supp{\operatorname{supp}}
\newcommand\HS{\operatorname{HS}}
\newcommand\Diag{\operatorname{Diag}}
\def\R{\mbox{$\mathbb{R}$}}
\begin{document}

\title[]{Transformations on density operators and on positive definite operators preserving the quantum R\'enyi divergence}
\author{MARCELL GA\'AL AND LAJOS MOLN\'AR}
\address{Department of Analysis, Bolyai Institute\\
University of Szeged\\
H-6720 Szeged, Aradi v\'ertan\'uk tere 1.,
Hungary and
MTA-DE ``Lend\" ulet'' Functional Analysis Research Group, Institute of Mathematics\\
         University of Debrecen\\
         H-4010 Debrecen, P.O. Box 12, Hungary}
\email{molnarl@math.u-szeged.hu}
\urladdr{http://www.math.u-szeged.hu/\~{}molnarl/}  
\address{}
\email{marcell.gaal.91@gmail.com}

\thanks{The second author was supported by the "Lend\" ulet" Program (LP2012-46/2012) of the Hungarian Academy of Sciences and by the Hungarian Scientific Research Fund (OTKA) Reg. No. K115383}

\begin{abstract}
In a certain sense we generalize the recently introduced and extensively studied notion called quantum R\'enyi divergence (in another name, sandwiched R\'enyi relative entropy) and describe the structures of corresponding symmetries. More precisely, we characterize all transformations on the set of density operators which leave our new general quantity invariant and also determine the structure of all bijective transformations on the cone of positive definite operators which preserve the quantum R\'enyi divergence.
\end{abstract}

\keywords{Quantum R\'enyi divergence, preservers, density operators, positive definite operators}
\maketitle

\section{Introduction and formulation of the results}

We begin with a brief survey of former results which have led us to the investigation of the problem described in the abstract. 
Relative entropy is one of the most important numerical quantities
in quantum information theory. It is used as a measure of distinguishability between quantum states, or their mathematical representatives, the density operators. In fact, there are several
concepts of relative entropy among which the most common one is due to Umegaki. In \cite{M} the second author determined the general form of all bijective transformations on the set of density operators which preserve that type of relative entropy. The motivation to explore the structure of those transformations came from the fundamental theorem
of Wigner concerning quantum mechanical symmetry transformations. Those transformations are bijective maps on the set of pure states (whose representatives are rank-one projections on a Hilbert space) which preserve the quantity of transition probability (trace of the product of rank-one projections). Roughly speaking, Wigner's theorem states that any quantum mechanical symmetry transformation is implemented by either a unitary or an antiunitary operator on the underlying Hilbert space. The result in \cite{M} says that the same conclusion holds for the bijective transformations on the set of density operators which preserve the Umegaki relative entropy. 
In the paper \cite{MLSz} the bijectivity assumption was removed from the result in \cite{M} while in \cite{MN} the structures of preservers of other types of relative entropy were determined. After this, in \cite{MNSz} a far-reaching generalization of the previously mentioned results was given. Namely, all transformations on the set of density operators which preserve any so-called quantum $f$-divergence with respect to an arbitrary strictly convex function were determined.

Our present results are closely related to the aforementioned ones.
Here we consider the recently introduced and very extensively studied notion called quantum R\'enyi divergence \cite{Muletal13} (or, in another terminology, sandwiched R\' enyi relative entropy) and describe its preservers on the space of density operators as well as on the cone of all positive definite operators on a finite dimensional complex Hilbert space. Concerning some recent results on quantum R\'enyi divergence we refer e.g. to \cite{DaLed14, Dup15, FL, LinTom15, MosOga15, Wildeatal14}.

To formulate our present results we need a short summary of some notation, basic concepts and facts which is given in the next paragraphs.

Denote by $\mathbb R^+$ the set of all positive real numbers and set $\mathbb{R}^{+}_0= \mathbb R^+\cup \{ 0\}$.
Let $H$ be a finite dimensional complex Hilbert space. We denote by $B(H)$ the algebra of all linear operators on $H$, by $B(H)^+$ the cone of all positive semidefinite operators on $H$, and by $B(H)^{++}$ the cone of all positive definite (invertible positive semidefinite) operators on $H$. In what follows $S(H)$ stands for the set of all density operators on $H$ which are operators in $B(H)^+$ having unit trace. We recall that $B(H)$ is a complex Hilbert space with the Hilbert-Schmidt inner product $\langle.,.\rangle_{\HS}\colon B(H)\times B(H)\to\mathbb{C}$ defined by
\[
\langle A,B\rangle_{\HS}=\tr AB^*\quad(A,B\in B(H)), 
\]
$\tr$ standing for the usual trace functional on $B(H)$. 

Next we give the definition of quantum $f$-divergence. To do this,
for any $A\in B(H)$ we introduce the left and the right multiplication operators $L_A,R_A\colon B(H)\to B(H)$ defined by
\[
L_AT=AT,\ R_AT=TA\quad(T\in B(H)).
\]
Clearly, $L_AR_B=R_BL_A$ holds for every $A,B\in B(H)$. If $A,B\in B(H)^+$, then $L_A$ and $R_B$ are positive Hilbert space operators on $B(H)$ as a Hilbert space, hence so is their product $L_AR_B$.

Let $f\colon\mathbb{R}^{+}_0\to\R$ be a function which is
continuous on $\mathbb{R}^{+}$ and assume that the limit
\begin{equation*}\label{gamma}
\gamma=\lim_{t\to\infty}\frac{f(t)}{t}
\end{equation*}
exists in the extended real line $[-\infty,\infty]$.
Essentially following \cite[2.1 Definition]{HMPB}, for $A\in B(H)^+$ and $B\in B(H)^{++}$ the quantum $f$-divergence $S_f(A\Vert B)$ of $A$ and $B$ is defined by
\[
S_f(A\Vert B)=\left\langle\sqrt{B},f(L_AR_{B^{-1}})\sqrt{B}\right\rangle_{\HS},
\]
while in the general case, i.e. for $A,B\in B(H)^+$,  we set
\begin{equation*}\label{E:fformula}
S_f(A\Vert B)=\lim\limits_{\varepsilon \searrow 0}S_f(A\Vert B+\varepsilon I)
\end{equation*}
where $I$ stands for the identity operator on $H$. By \cite[2.2 Proposition]{HMPB} the limit above exists in $[-\infty,\infty]$ and it can be computed as follows. Let $A,B\in B(H)^+$ and for any $\l\in\R$ denote by $P_{\l}$, respectively by $Q_{\l}$ the projection on $H$ onto the kernel of $A-\l I$, respectively onto the kernel of $B-\l I$. According to \cite[2.3 Corollary]{HMPB} we have
\begin{equation}\label{E:formula}
S_f(A||B)=\sum_{a\in\sigma(A)}\left(\sum_{b\in\sigma(B)\backslash\{0\}}
bf\left(\frac{a}{b}\right)\tr P_aQ_b+\gamma a\tr P_aQ_0\right),
\end{equation}
where $\sigma(.)$ stands for the spectrum of operators in $B(H)$ and the convention $0\cdot(-\infty)=0\cdot\infty=0$ is used.

Two important examples of quantum $f$-divergences on density operators follow, see \cite[2.7 Example]{HMPB}.

\begin{itemize}
\item[(i)] If
\[
f(t)=\left\{
       \begin{array}{ll}
        t\log t, & t>0\\
        0, & t=0,
       \end{array}
     \right.
\]
then for $A,B\in \S$ we have
\[
S_f(A\Vert B)=\left\{
       \begin{array}{ll}
        \tr A(\log A-\log B), & \supp A\subset\supp B\\
        \infty, & \mbox{otherwise}
       \end{array}
     \right.
\]
which is just the usual Umegaki relative entropy of $A$ and $B$. Here and in what follows $\supp$ stands for the support of an operator which is the orthogonal complement of its kernel.
\item[(ii)] Let $\alpha \in \left]0,1\right[ \cup \left]1, \infty\right[ $ be fixed and $f(t)=t^\alpha$ $(t\geq 0)$. Pick $A,B\in \S$. For $\alpha\in \left]0,1\right[$ we have
\begin{equation*}\label{E:S1}
S_f(A\Vert B)=
        \tr A^{\alpha}B^{1-\alpha} 
\end{equation*}
and for $\alpha \in \left]1,\infty\right[$ we have
\begin{equation*}\label{E:S2}
S_f(A\Vert B)=\left\{
       \begin{array}{ll}
        \tr A^{\alpha}B^{1-\alpha}, & \supp A\subset\supp B\\
        \infty, & \mbox{otherwise.}
       \end{array}
     \right.
\end{equation*}
The "traditional" R\'enyi relative entropy $S_\alpha(.\Vert .)$ with parameter $\alpha$ is closely related to the quantum $f$-divergence above and is defined as follows. Pick $A,B\in \S$.
For $\alpha\in \left]0,1\right[$ set
\[
S_\alpha(A\Vert B)=\left\{
       \begin{array}{ll}
        (\alpha -1)^{-1}\log\left(\tr A^{\alpha}B^{1-\alpha}\right), & \supp A\not \perp \supp B \\
        \infty, & \mbox{otherwise.}
       \end{array}
     \right.
\]
and for $\alpha \in \left]1,\infty\right[$ set
\[
S_\alpha(A\Vert B)=\left\{
       \begin{array}{ll}
        (\alpha -1)^{-1}\log\left(\tr A^{\alpha}B^{1-\alpha}\right), & \supp A\subset\supp B \\
        \infty, & \mbox{otherwise.}
       \end{array}
     \right.
\]
\end{itemize}
We further mention that the quantum Tsallis relative entropy is a particular quantum $f$-divergence, see e.g. \cite{MNSz}, p. 2312.

As already written above, in \cite{MNSz} we determined all transformations (not necessarily bijective) of $\S$ which preserve the quantum $f$-divergence corresponding to any strictly convex (or strictly concave) function $f\colon\mathbb{R}^{+}_0\to\R$. We proved that any such transformation on $\S$ is necessarily implemented by a unitary or an antiunitary operator $H$ and hence the same conclusion holds for maps preserving R\'enyi relative entropy (and quantum Tsallis entropy) with any parameter. 

Let us now turn to the concept of the quantum R\'enyi divergence introduced in \cite{Muletal13}. We recall that it was also introduced in \cite{Wildeatal14} under the name "sandwiched R\'enyi relative entropy".
Referring to \cite{Muletal13}, the definition of the quantum R\'enyi divergence  $D_\alpha(.\Vert .)$ with parameter $\alpha$ is as follows. Pick arbitrary nonzero $A,B\in B(H)^+$. For $\alpha \in \left]0,1\right[$ define
\[
D_\alpha(A\Vert B)=\left\{
       \begin{array}{ll}
        (\alpha -1)^{-1}\log\left(\left(\tr A\right)^{-1}\tr \left(B^\frac{1-\alpha}{2\alpha}AB^\frac{1-\alpha}{2\alpha}\right)^{\alpha} \right), & \supp A\not \perp \supp B\\ 
        \infty, & \mbox{otherwise}       
       \end{array}
     \right.
\] 
and for $\alpha \in \left]1,\infty\right[$ define
\[
D_\alpha(A\Vert B)=\left\{
       \begin{array}{ll}
        (\alpha -1)^{-1}\log\left(\left(\tr A\right)^{-1}\tr \left(B^\frac{1-\alpha}{2\alpha}AB^\frac{1-\alpha}{2\alpha}\right)^{\alpha} \right), & \supp A\subset \supp B\\ 
        \infty, & \mbox{otherwise.}       
       \end{array}
     \right.
\]
Our primary aim in this paper is to describe the transformations of $\S$ which preserve the quantum R\'enyi divergence. Apparently, the problem is equivalent to the description of the preservers of the following related quantities. Pick any $A,B\in \S$ and
for $\alpha\in \left]0,1\right[$ set
\begin{equation}\label{E:dformula1}
D'_\alpha(A\Vert B)=
        \tr \left(B^\frac{1-\alpha}{2\alpha}AB^\frac{1-\alpha}{2\alpha}\right)^{\alpha}
\end{equation}
and for $\alpha \in \left]1,\infty\right[$ set
\begin{equation}\label{E:dformula2}
D'_\alpha(A\Vert B)=\left\{
       \begin{array}{ll}
        \tr \left(B^\frac{1-\alpha}{2\alpha}AB^\frac{1-\alpha}{2\alpha}\right)^{\alpha}, & \supp A\subset \supp B\\ 
        \infty, & \mbox{otherwise.}       
       \end{array}
     \right.
\end{equation}
Obviously, if these quantities were quantum $f$-divergences corresponding to some strictly convex (or strictly concave) functions, then our result in \cite{MNSz} would apply and we would be done. Therefore, we need to verify that the quantities $D'_\alpha$ are not quantum $f$-divergences. This is the content of our first proposition.


\begin{Proposition}\label{P:1}
For any $\alpha \in \left]0,1\right[ \cup \left]1, \infty\right[$ we have that $D'_\alpha(.||.)$ is not a quantum $f$-divergence on $\S$.
\end{Proposition}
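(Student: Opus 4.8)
The plan is to show that $D'_\alpha$ fails to have the defining structure of a quantum $f$-divergence by testing it on a suitably chosen family of pairs of density operators and comparing with formula~\eqref{E:formula}. The key observation is that for a genuine quantum $f$-divergence $S_f$, the value $S_f(A\Vert B)$ depends on the pair $(A,B)$ only through the eigenvalues of $A$ and $B$ and the ``overlaps'' $\tr P_aQ_b$ between the corresponding spectral projections; in particular it is \emph{bi-unitarily invariant} in the sense that $S_f(UAU^*\Vert UBU^*)=S_f(A\Vert B)$ for every unitary $U$, and more importantly it is linear in $A$ for fixed $B$ restricted to operators with a common eigenbasis. I would exploit a cleaner consequence: if $A$ and $B$ commute, then \eqref{E:formula} collapses to $S_f(A\Vert B)=\sum b\, f(a/b)\,\tr P_aQ_b$ (the $Q_0$ term being absent on $\S$ when supports are comparable), which is an expression built \emph{entirely} from the joint spectral data. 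The strategy is to produce two pairs of commuting density operators having exactly the same joint spectral data (same list of pairs $(a,b)$ with the same multiplicities $\tr P_aQ_b$) but different values of $D'_\alpha$, which is impossible for any $f$ and hence rules out $D'_\alpha$ being a quantum $f$-divergence.

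Concretely, I would first compute $D'_\alpha(A\Vert B)$ for commuting $A,B$: diagonalizing simultaneously, $B^{(1-\alpha)/(2\alpha)}AB^{(1-\alpha)/(2\alpha)}$ has eigenvalues $a_i b_i^{(1-\alpha)/\alpha}$, so
\[
D'_\alpha(A\Vert B)=\sum_i \bigl(a_i b_i^{\frac{1-\alpha}{\alpha}}\bigr)^{\alpha}=\sum_i a_i^{\alpha} b_i^{1-\alpha},
\]
which on commuting pairs coincides with the $f(t)=t^\alpha$ quantum $f$-divergence from Example~(ii). So on \emph{commuting} pairs $D'_\alpha$ is a perfectly good $f$-divergence, and the obstruction must come from non-commuting pairs. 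Hence the real plan is the reverse: pick a non-commuting pair $(A,B)$ and a commuting pair $(A_0,B_0)$ that a quantum $f$-divergence cannot distinguish — i.e. arrange that the spectra satisfy $\sigma(A)=\sigma(A_0)$, $\sigma(B)=\sigma(B_0)$ and all overlap numbers $\tr P_aQ_b$ agree — and show $D'_\alpha(A\Vert B)\neq D'_\alpha(A_0\Vert B_0)$. In dimension two this is easy to set up: take $A=A_0=\operatorname{diag}(p,1-p)$ fixed, and compare $B_0=\operatorname{diag}(q,1-q)$ with $B=U\operatorname{diag}(q,1-q)U^*$ for a generic unitary $U$; then $\tr P_aQ_b$ changes with $U$, so this particular pair does \emph{not} share joint data with the commuting one. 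Instead I would keep $A$ maximally mixed, $A=\tfrac12 I$ (so all its spectral projections are ``$I$'' and the overlaps $\tr P_aQ_b=\tr Q_b$ are unitarily invariant and depend only on $\sigma(B)$), which forces any quantum $f$-divergence to satisfy $S_f(\tfrac12 I\Vert B)=S_f(\tfrac12 I\Vert B')$ whenever $\sigma(B)=\sigma(B')$. Then I need a $B$ and a diagonal $B'$ with the same spectrum for which $D'_\alpha(\tfrac12 I\Vert B)\neq D'_\alpha(\tfrac12 I\Vert B')$.

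But $D'_\alpha(\tfrac12 I\Vert B)=2^{-\alpha}\tr\bigl(B^{(1-\alpha)/\alpha}\bigr)^{\alpha}=2^{-\alpha}\tr B^{1-\alpha}$ depends only on $\sigma(B)$ too, so $A=\tfrac12 I$ is also useless. The moral is that I must vary $A$, $B$ \emph{and} their relative position simultaneously. So the genuine argument will compare two pairs $(A_1,B_1)$ and $(A_2,B_2)$ with $\sigma(A_1)=\sigma(A_2)$, $\sigma(B_1)=\sigma(B_2)$, and matching overlap matrices $[\tr P_aQ_b]$, where at least one pair is non-commuting; the point is that even in dimension two one can realize the same doubly-stochastic overlap matrix $[\tr P_aQ_b]$ by a \emph{commuting} pair (permutation matrix entries $0,1$) only in degenerate cases, so instead I fix the overlap matrix to be a fixed non-trivial doubly stochastic matrix (say with all entries in $(0,1)$) and note it is realized by a one-parameter family of unitaries $U_\theta$ rotating the eigenbasis of $B$ relative to that of $A$; every quantum $f$-divergence is \emph{constant} on this family by \eqref{E:formula}, whereas I claim $D'_\alpha$ is not. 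Thus the concrete computation at the heart of the proof is: fix $A=\operatorname{diag}(a_1,a_2)$, $B_\theta=R_\theta\operatorname{diag}(b_1,b_2)R_\theta^{*}$ with $R_\theta$ a rotation, check that $\tr P_{a_i}Q_{b_j}=|(R_\theta)_{ij}|^2$ is the same for $\theta$ and, say, $\pi/2-\theta$ (or some other parameter value giving the same squared entries), and verify that $\theta\mapsto D'_\alpha(A\Vert B_\theta)=2^{-\text{?}}\tr\bigl(B_\theta^{(1-\alpha)/(2\alpha)}AB_\theta^{(1-\alpha)/(2\alpha)}\bigr)^{\alpha}$ is a genuinely non-constant function of the overlap-preserving parameter change — e.g. by differentiating at a symmetric point, or by evaluating at two explicit values of $\theta$ with closed-form $2\times2$ eigenvalues.

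The main obstacle I anticipate is precisely this last numerical verification: the eigenvalues of the $2\times 2$ matrix $B_\theta^{(1-\alpha)/(2\alpha)}AB_\theta^{(1-\alpha)/(2\alpha)}$ are messy (they involve $\cos\theta,\sin\theta$ and the powers $b_i^{(1-\alpha)/(2\alpha)}$), so showing $D'_\alpha$ is non-constant along the chosen overlap-preserving family without it collapsing to something spectral requires a careful choice of $a_i,b_i$ and, likely, expanding to first or second order in a perturbation parameter. A clean way to organize this is to take $b_1=b_2$ to be \emph{unequal} (so $B$ is non-scalar) but perturb $A$ away from scalar by $\epsilon$: set $A=\tfrac12 I+\epsilon D$ with $D$ a fixed traceless self-adjoint operator, $B$ fixed non-scalar; compute $D'_\alpha$ to order $\epsilon^2$ and compare with the order-$\epsilon^2$ expansion of a putative $f$-divergence — the first-order terms match automatically (both are linear perturbations with the same spectral/overlap data to that order), and the obstruction surfaces at second order, where the quantum $f$-divergence contributes a term governed by $f''$ evaluated along the eigenvalues while $D'_\alpha$ contributes a term with extra off-diagonal (non-commuting) contributions. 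Exhibiting that the second-order term of $D'_\alpha$ is not expressible in terms of the spectra and overlaps alone — for instance by choosing two different $D$'s with the same diagonal part in the eigenbasis of $B$ (hence same first-order and same ``commuting'' second-order data) but different off-diagonal part, and getting different $D'_\alpha$ values — completes the proof. I would present the dimension-two instance explicitly and remark that it embeds into any $H$ with $\dim H\ge 2$.
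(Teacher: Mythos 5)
Your guiding principle---that by \eqref{E:formula} any quantum $f$-divergence is a function of the joint spectral data $\bigl(\sigma(A),\sigma(B),[\tr P_aQ_b]\bigr)$ alone, so that it suffices to exhibit two pairs with identical data but different $D'_\alpha$---is legitimate, but the realization you propose cannot work because you insist on dimension two. In $\dim H=2$ a pair of rank-one projections is determined up to unitary equivalence by the single number $\tr PQ$, hence a pair $(A,B)$ of density operators is determined up to unitary equivalence by $\sigma(A)$, $\sigma(B)$ and the overlap matrix; since $D'_\alpha$ is unitarily invariant, in dimension two it automatically factors through the joint spectral data, and no counterexample of your type exists there. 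Your own candidates confirm this. For $A=\Diag(a_1,a_2)$ and $B_\theta=R_\theta\Diag(b_1,b_2)R_\theta^*$ the overlap-preserving parameter changes are $\theta\mapsto-\theta$ and $\theta\mapsto\pi\pm\theta$ (not $\pi/2-\theta$, which swaps the overlaps), and $B_{-\theta}=JB_\theta J$ with $J=\Diag(1,-1)$ commuting with $A$, so $D'_\alpha(A\Vert B_{-\theta})=D'_\alpha(A\Vert B_\theta)$. Likewise, in the perturbative version, keeping $\sigma(I/2+\epsilon D)$ fixed forces the two perturbations to have off-diagonal entries of equal modulus as well as equal diagonals, so they differ by a diagonal phase conjugation fixing $B$, and again $D'_\alpha$ cannot distinguish them (while if you do not fix $\sigma(A)$, an $f$-divergence is entitled to distinguish the two configurations and no contradiction arises). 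To salvage your strategy one needs $\dim H\ge3$ and two unitaries $U,V$ with $|U_{ij}|^2=|V_{ij}|^2$ for all $i,j$ whose associated pairs are not unitarily or antiunitarily equivalent, together with an actual verification that $D'_\alpha$ separates them; none of this is supplied, and the embedding remark does not repair the two-dimensional core.

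The paper's route is different and works already in dimension two. First, the commuting case pins down $f$: via the functional-equation Lemma~\ref{L:fu} one gets $f(t)=t^\alpha+c(t-1)$, whence $S_f(A\Vert B)=\tr A^\alpha B^{1-\alpha}$. Then one tests rank-one $A=x\otimes x$ against invertible $B$: there $S_f$ reduces to $\langle B^{1-\alpha}x,x\rangle=\sum_b b^{1-\alpha}\langle Q_bx,x\rangle$, which is \emph{affine} in the overlap vector $(\langle Q_bx,x\rangle)_b$, while $D'_\alpha$ reduces to $\bigl(\sum_b b^{(1-\alpha)/\alpha}\langle Q_bx,x\rangle\bigr)^\alpha$, and equality for all admissible overlap vectors contradicts the strict convexity/concavity of $t\mapsto t^\alpha$. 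The point you are missing is that the obstruction is not a pair of indistinguishable configurations but an incompatibility of functional forms on the \emph{same} configuration space: on rank-one first arguments every $f$-divergence is affine in the data and $D'_\alpha$ is not.
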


Consequently, our original question does make sense and hence we can proceed. In fact, in what follows we solve a more general preserver problem. Namely, we introduce a quantity on $\S$ much more general than $D'_\alpha$ and describe the corresponding invariance transformations.

To do this, pick continuous functions $f:\mathbb{R}^+\to\mathbb{R}^+$ and $g:\mathbb{R}^{+}_0\to\mathbb{R}^{+}_0$. We must emphasize that this function $f$ has nothing to do with the function appearing in the concept of quantum $f$-divergence. We believe the use of the symbol $f$ in that other context causes no confusion. We define the quantity $D'_{f,g}(.\Vert .)$ for arbitrary $A\in B(H)^+$ and $B\in B(H)^{++}$ by
\[
D'_{f,g}(A\Vert B) = \tr g\left(f(B)Af(B)\right).
\]
Next, following the common approach used also in \eqref{E:fformula}, for any $A,B\in B(H)^+$ we would like to define 
\begin{equation}\label{E:1}
D'_{f,g}(A\Vert B) = \lim_{\varepsilon \searrow 0} D'_{f,g}(A\Vert B+\varepsilon I).
\end{equation}
In the next proposition we see that this can really be done in certain cases meaning that the limit in \eqref{E:1} exists under certain conditions on $f$ and $g$.

\begin{Proposition}\label{P}
Assume that $f:\mathbb{R}^+\to\mathbb{R}^+$ and $g:\mathbb{R}^{+}_0\to\mathbb{R}^{+}_0$ are continuous functions and $g(0)=0$. Select $A,B \in B(H)^+$ and denote by $P_B$ the orthogonal projection on $H$ onto the support of $B$.
\begin{itemize}
\item[(i)] If $\lim\limits_{\varepsilon \searrow 0}f(\varepsilon)=0$, then
the limit 
\begin{equation}\label{E:limit}
D'_{f,g}(A\Vert B)=\lim\limits_{\varepsilon \searrow 0} \tr g\left(f(B+\varepsilon I)Af(B+\varepsilon I)\right)
\end{equation}
exists and we have
\begin{equation} \label{dab1}
D'_{f,g}(A\Vert B) = \tr g\left(f(B|_{\supp B})P_BAP_Bf(B|_{\supp B})\right).
\end{equation}
(Here the operator $f(B|_{\supp B})P_BAP_Bf(B|_{\supp B})$ acts on $\supp B$.)
\item[(ii)] If $\lim\limits_{\varepsilon \searrow 0}f(\varepsilon)=\infty$, $g$ is monotone increasing and has limit $\infty$ at $\infty$, then
the limit \eqref{E:limit} exists and we have
\begin{equation} \label{dab2}
D'_{f,g}(A\Vert B)=\left\{
       \begin{array}{ll}
        \tr g\left(f(B|_{\supp B})P_BAP_Bf(B|_{\supp B})\right), & \supp A\subset\supp B\\
        \infty, & \mbox{otherwise.}
       \end{array}
     \right.
\end{equation}
\end{itemize}
\end{Proposition}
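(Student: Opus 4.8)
The plan is to treat the two cases separately, in each case reducing the problem to understanding the behavior of the operator $f(B+\varepsilon I)Af(B+\varepsilon I)$ as $\varepsilon \searrow 0$ and then invoking continuity of $g$ together with continuity of the trace on the finite-dimensional algebra $B(H)$. Throughout I would work in an orthonormal eigenbasis of $B$, writing $B = B|_{\supp B} \oplus 0$ with respect to the decomposition $H = \supp B \oplus \ker B$, so that $B + \varepsilon I = (B|_{\supp B} + \varepsilon I) \oplus \varepsilon I$ and hence, by the functional calculus, $f(B+\varepsilon I) = f(B|_{\supp B}+\varepsilon I) \oplus f(\varepsilon)I_{\ker B}$. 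Writing $A$ in block form relative to the same decomposition, $A = \begin{pmatrix} A_{11} & A_{12} \\ A_{21} & A_{22}\end{pmatrix}$ with $A_{11}$ acting on $\supp B$ and $A_{22}$ on $\ker B$, a direct computation gives the block form of $f(B+\varepsilon I) A f(B+\varepsilon I)$ with entries $f(B|_{\supp B}+\varepsilon I)A_{11}f(B|_{\supp B}+\varepsilon I)$, the cross terms $f(\varepsilon)f(B|_{\supp B}+\varepsilon I)A_{12}$ and its adjoint, and $f(\varepsilon)^2 A_{22}$.

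For part (i), since $f$ is continuous on $\mathbb{R}^+$ and $\lim_{\varepsilon\searrow 0} f(\varepsilon) = 0$, each of the four blocks converges entrywise as $\varepsilon \searrow 0$: the $(1,1)$ block tends to $f(B|_{\supp B})A_{11}f(B|_{\supp B})$, the two cross terms tend to $0$ because of the scalar factor $f(\varepsilon)$, and the $(2,2)$ block tends to $0$ because of $f(\varepsilon)^2$. Hence $f(B+\varepsilon I)Af(B+\varepsilon I) \to f(B|_{\supp B})A_{11}f(B|_{\supp B}) \oplus 0$ in the norm of $B(H)$. Now $g$ is continuous on $\mathbb{R}^{+}_0$ with $g(0)=0$, and since the limiting operator is positive semidefinite (being a limit of positive semidefinite operators), the continuous functional calculus $X \mapsto g(X)$ is norm-continuous at it on the positive cone; combined with continuity of $\tr$ this yields
\[
D'_{f,g}(A\Vert B) = \tr g\bigl(f(B|_{\supp B})A_{11}f(B|_{\supp B}) \oplus 0\bigr) = \tr g\bigl(f(B|_{\supp B})A_{11}f(B|_{\supp B})\bigr),
\]
using $g(0)=0$ to discard the contribution on $\ker B$, and observing that $A_{11} = P_B A P_B|_{\supp B}$, which is exactly \eqref{dab1}.

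For part (ii) the situation splits according to whether $\supp A \subset \supp B$. If this inclusion holds, then in the block decomposition $A_{12}$, $A_{21}$, $A_{22}$ all vanish (since $A$ is positive and $P_{\ker B} A P_{\ker B} = A_{22} = 0$ forces the off-diagonal blocks to vanish as well), so $f(B+\varepsilon I)Af(B+\varepsilon I) = f(B|_{\supp B}+\varepsilon I)A_{11}f(B|_{\supp B}+\varepsilon I) \oplus 0$, which converges to $f(B|_{\supp B})A_{11}f(B|_{\supp B}) \oplus 0$ regardless of the behavior of $f$ at $0$, and the same continuity argument as in part (i) gives the stated value. If the inclusion fails, then $A_{22} = P_{\ker B} A P_{\ker B} \neq 0$, so it has a strictly positive eigenvalue $\mu$; the $(2,2)$ block of $f(B+\varepsilon I)Af(B+\varepsilon I)$ equals $f(\varepsilon)^2 A_{22}$, whose largest eigenvalue is $f(\varepsilon)^2 \mu \to \infty$. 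The key step here is to conclude that the largest eigenvalue of the full operator $f(B+\varepsilon I)Af(B+\varepsilon I)$ also tends to $\infty$: this follows from a min–max / compression argument, since compressing the positive operator $f(B+\varepsilon I)Af(B+\varepsilon I)$ to $\ker B$ gives $f(\varepsilon)^2 A_{22}$ and compression cannot increase the top eigenvalue, so $\lambda_{\max}(f(B+\varepsilon I)Af(B+\varepsilon I)) \geq f(\varepsilon)^2\mu$. Since $g$ is monotone increasing with $\lim_{t\to\infty}g(t) = \infty$, the term of $\tr g(f(B+\varepsilon I)Af(B+\varepsilon I))$ coming from the top eigenvalue is at least $g(f(\varepsilon)^2\mu) \to \infty$, while all other eigenvalues contribute nonnegatively (as $g \geq 0$); hence the trace diverges to $\infty$, as claimed.

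I expect the main obstacle to be the rigorous control of the spectrum in part (ii) when the support inclusion fails — specifically, making precise that a single divergent eigenvalue forces the whole trace of $g$ to diverge, which requires both the compression bound for the top eigenvalue and the nonnegativity $g \geq 0$ to discard the remaining (possibly oscillating or even unbounded-below in other formulations, but here nonnegative) eigenvalue contributions. The convergence arguments in part (i) and in the first case of part (ii) are routine applications of norm-continuity of the functional calculus on the positive cone and continuity of the trace in finite dimensions.
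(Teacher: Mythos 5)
Your proof is correct, and its skeleton — the block decomposition of $A$ and of $f(B+\varepsilon I)$ with respect to $H=\supp B\oplus\ker B$, the entrywise limit of the four blocks, and the use of $g(0)=0$ together with norm-continuity of the functional calculus and of the trace — is exactly the paper's argument for part (i) and for the case $\supp A\subset\supp B$ of part (ii). The only genuine divergence (pun intended) is in the final step of part (ii): where you bound $\lambda_{\max}\bigl(f(B+\varepsilon I)Af(B+\varepsilon I)\bigr)\geq f(\varepsilon)^2\mu$ by a min--max compression argument and then use $\tr g(X)\geq g(\lambda_{\max}(X))$ (valid since $g\geq 0$ and $g$ is increasing), the paper instead picks a unit vector $v\in\ker B\setminus\ker A$, sets $Q=v\otimes v$, observes the operator inequality $f(B+\varepsilon I)^2\geq f(\varepsilon)^2Q$, and invokes its Lemma~\ref{lemma:1} (which rests on the unitary equivalence $BAB\sim\sqrt{A}B^2\sqrt{A}$ of Lemma~\ref{L:1} and the monotonicity of trace functions cited from Carlen) to get $\tr g(f(B+\varepsilon I)Af(B+\varepsilon I))\geq g\bigl(f(\varepsilon)^2\langle A_1z,z\rangle\bigr)$. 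Your eigenvalue route is more elementary and self-contained, since it avoids the trace-function monotonicity machinery; the paper's route has the advantage of reusing lemmas it needs anyway for the main theorems. One small point worth writing out: your assertion that $\supp A\not\subset\supp B$ forces $A_{22}=P_{\ker B}AP_{\ker B}\neq 0$ deserves its one-line justification (if $A_{22}=0$ then $\langle Av,v\rangle=\Vert A^{1/2}v\Vert^2=0$ for all $v\in\ker B$, hence $\ker B\subset\ker A$), which is the same positivity argument the paper spells out at greater length when proving $A_1z\neq 0$.
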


After this we can formulate the main results of the paper. First observe that any unitary or antiunitary similarity transformation on $S(H)$, i.e. any map on $S(H)$ of the form  $A\mapsto UAU^*$ with unitary or antiunitary operator $U$ on $H$, leaves the above defined quantities $D_{f,g}'(.\Vert .)$ invariant (see e.g. the first two sentences in the proof of Lemma~\ref{L:1}). In what follows we present results which state that
if $f$ and $g$ satisfy certain conditions then, conversely, any transformation $\phi$ (not necessarily bijective) on $\S$ that preserve $D'_{f,g}(.,\Vert .)$ is a unitary or antiunitary similarity transformation, i.e. induced by a unitary or an antiunitary operator on $H$. This means that the symmetries of $S(H)$ with respect to any member of a large class of "generalized" divergences are all the most simple transformations.


\begin{Theorem}\label{T:2}
Assume that $f:\mathbb{R}^+\to\mathbb{R}^+$ is a continuous function with $\lim\limits_{\varepsilon \searrow 0}f(\varepsilon)=0 $, and $g:\mathbb{R}^{+}_0\to\mathbb{R}^{+}_0$ is an injective continuous function with $g(0)=0$. If $\phi\colon \S\rightarrow \S$ is a transformation satisfying
\[
D'_{f,g}(\phi(A)\Vert\phi(B))=D'_{f,g}(A\Vert B)\quad (A,B\in \S)
\]
then there is either a unitary or an antiunitary operator $U$ on $H$ such
that $\phi$ is of the form
\[
 \phi(A)=UA U^*\qquad (A\in \S).
\]
\end{Theorem}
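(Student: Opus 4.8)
The plan is to reduce the problem to a known Wigner-type rigidity theorem by extracting, from the invariance of $D'_{f,g}$, a finer invariant that survives on special configurations of density operators. First I would reformulate the functional equation: since $g$ is injective and continuous with $g(0)=0$, by Proposition~\ref{P} (part (i)) the equality $D'_{f,g}(\phi(A)\Vert\phi(B))=D'_{f,g}(A\Vert B)$ carries information about the spectra of the operators $f(B|_{\supp B})P_BAP_Bf(B|_{\supp B})$. The natural first move is to feed the equation carefully chosen pairs $(A,B)$. Taking $A=B$ (with $B$ a density operator, hence invertible on its support, and more simply $B$ positive definite) gives $\tr g(f(B)Bf(B))=\tr g(f(\phi(B))\phi(B)f(\phi(B)))$, so $\phi$ preserves a one-variable functional of the spectrum; choosing $B$ to range over rank-one projections or over scalar multiples of spectral projections should already force $\phi$ to send rank-one projections to rank-one projections and to respect the rank structure.

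The key step is then to isolate the transition-probability-type invariant. The standard trick in this circle of ideas is to take $B=P$ a rank-one projection. For $B$ rank-one, $\supp B$ is one-dimensional, $f(B|_{\supp B})$ is multiplication by the scalar $c:=f(1)>0$ (using that $B$ restricted to its support is the identity), and $P_BAP_B$ is the scalar $\langle A\xi,\xi\rangle P$ where $P=\xi\otimes\xi$; hence $D'_{f,g}(A\Vert P)=g\bigl(c^{2}\langle A\xi,\xi\rangle\bigr)$. Since $g$ is injective this quantity determines $\langle A\xi,\xi\rangle=\tr AP$. Because $\phi$ need not fix rank-one projections a priori, I would instead run the argument through the density-operator version: for $A$ and $B$ both rank-one projections one gets that $\phi$ preserves $\tr PQ$ up to first identifying that $\phi$ maps the set of rank-one projections into itself (using that rank-one projections $P$ are characterized among density operators by the behaviour of $B\mapsto D'_{f,g}(P\Vert B)$, or by an extremality/rank argument as in \cite{MNSz}). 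Once $\phi$ restricted to rank-one projections preserves transition probability, Wigner's theorem supplies a unitary or antiunitary $U$ with $\phi(P)=UPU^*$ for every rank-one projection $P$.

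After the Wigner step the remaining task is to propagate $\phi(A)=UAU^*$ from rank-one projections to all of $S(H)$. Replacing $\phi$ by $A\mapsto U^*\phi(A)U$ we may assume $\phi$ fixes every rank-one projection. Then for arbitrary $A\in S(H)$ and every rank-one projection $P=\xi\otimes\xi$ the scalar invariant computed above gives $\langle \phi(A)\xi,\xi\rangle = \langle A\xi,\xi\rangle$ for all unit vectors $\xi$, and since a selfadjoint operator is determined by its diagonal quadratic form this yields $\phi(A)=A$. Thus $\phi$ is the identity, and unwinding the reduction gives $\phi(A)=UAU^*$ on all of $S(H)$.

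The main obstacle I anticipate is the first part: showing that $\phi$ actually maps rank-one projections into rank-one projections (equivalently, that it preserves rank), since $\phi$ is assumed neither bijective nor continuous. The functional $D'_{f,g}$ involves the generally complicated operator $f(B)Af(B)$, so one cannot directly read off $\tr PQ$ for general arguments; the trick is to find a characterization of rank-one projections inside $S(H)$ phrased purely in terms of the values $D'_{f,g}(\cdot\Vert\cdot)$ — for instance via a variational/extremal property of $B\mapsto \tr g(f(B)Af(B))$ over $A$, or via the behaviour under convex combinations — and this is where the specific hypotheses on $f$ and $g$ (positivity, $\lim_{\varepsilon\searrow0}f(\varepsilon)=0$, injectivity of $g$, $g(0)=0$) must be used. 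This is presumably the point at which the proof mirrors, but must adapt, the argument of \cite{MNSz}.
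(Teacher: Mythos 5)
Your overall strategy --- reduce to Wigner's theorem via transition probabilities on rank-one projections, then propagate using the scalar invariant $D'_{f,g}(A\Vert P)=g\bigl(f(1)^2\langle A\xi,\xi\rangle\bigr)$ --- is exactly the route the paper takes, and your second and third paragraphs (the computation for rank-one $B$, the Wigner step, and the propagation $\langle\phi(A)\xi,\xi\rangle=\langle A\xi,\xi\rangle$ for all unit vectors $\xi$) are correct as written. The problem is the step you yourself flag as the main obstacle: you never actually prove that $\phi$ maps $P_1(H)$ into $P_1(H)$; you only list candidate strategies (``extremality'', ``behaviour under convex combinations'', ``as in \cite{MNSz}'') without carrying any of them out. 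Since everything downstream depends on this, the proposal as it stands has a genuine gap. (Your suggested detour through $A=B$ is not used and not needed; also note that the Wigner theorem you must invoke is the non-bijective version, since $\phi$ is not assumed bijective.)

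The missing argument is in fact short and uses precisely the hypotheses you list. By \eqref{dab1}, $D'_{f,g}(A\Vert B)=\tr g\bigl(f(B|_{\supp B})P_BAP_Bf(B|_{\supp B})\bigr)$; since $g\ge 0$, $g(0)=0$ and $g$ is injective, this trace vanishes iff every eigenvalue of the positive semidefinite argument is $0$, i.e.\ iff $f(B|_{\supp B})P_BAP_Bf(B|_{\supp B})=0$, and since $f>0$ makes $f(B|_{\supp B})$ invertible this is equivalent to $P_BAP_B=0$, hence to $AB=0$. So $AB=0\iff D'_{f,g}(A\Vert B)=0$, and $\phi$ preserves orthogonality in both directions. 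Now a density operator is a rank-one projection iff it belongs to a family of $n=\dim H$ pairwise orthogonal density operators (pairwise orthogonal supports in an $n$-dimensional space must all be one-dimensional), a characterization phrased purely in terms of orthogonality; hence $\phi(P_1(H))\subset P_1(H)$. With this inserted, your proof closes and coincides with the paper's.
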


In the next result we obtain the same conclusion under different conditions on $f$ and $g$.


\begin{Theorem}\label{T:3}
Assume that $f:\mathbb{R}^+\to\mathbb{R}^+$ is a strictly monotone decreasing strictly convex function with $\lim\limits_{\varepsilon \searrow 0} f(\varepsilon)=+\infty $, and $g:\mathbb{R}^{+}_0\to\mathbb{R}^{+}_0$ is a strictly monotone increasing strictly convex (or strictly concave) continuous function with $g(0)=0$ and $\lim_{t\to \infty} g(t)=\infty$. If $\phi\colon \S\rightarrow \S$ is a transformation satisfying
\[
D'_{f,g}(\phi(A)\Vert\phi(B))=D'_{f,g}(A\Vert B)\quad(A,B\in \S)
\]
then there is either a unitary or an antiunitary operator $U$ on $H$ such
that $\phi$ is of the form
\[
 \phi(A)=UA U^*\qquad (A\in \S).
\]
\end{Theorem}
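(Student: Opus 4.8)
The plan is to reduce the statement to Wigner's theorem, in the spirit of the quantum $f$-divergence case treated in \cite{MNSz}; the genuinely new ingredient is extracting transition probabilities of rank-one projections from $D'_{f,g}$.

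\emph{Order and rank structure.} Since $\lim_{\varepsilon\searrow 0}f(\varepsilon)=+\infty$, $g$ is monotone increasing and $\lim_{t\to\infty}g(t)=\infty$, Proposition~\ref{P}(ii) applies: for $A,B\in\S$ the value $D'_{f,g}(A\Vert B)$ is finite exactly when $\supp A\subset\supp B$, in which case it equals $\tr g\!\big(f(B|_{\supp B})P_BAP_Bf(B|_{\supp B})\big)$. Hence $\phi$ preserves the relation $\supp A\subset\supp B$ in both directions, so it preserves equality of supports and the rank function (the rank of $A$ being encoded in the lengths of chains of supports contained in $\supp A$); in particular $\phi$ maps rank-one projections to rank-one projections, and we write $\psi$ for this restriction of $\phi$. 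A short computation from that formula shows that $D'_{f,g}(P\Vert B)=g\!\big(\tr(f(B|_{\supp B})^2P)\big)$ for every rank-one projection $P$ and every $B\in\S$ with $\supp P\subset\supp B$; since $g$ is injective, $\phi$ then preserves $\tr(f(B|_{\supp B})^2P)$ for such pairs, and taking $B$ positive definite (so that $\phi(B)$ is positive definite as well, ranks being preserved) we obtain
\begin{equation}\label{P:key}
\tr\!\big(f(\phi(B))^2\,\psi(P)\big)=\tr\!\big(f(B)^2P\big).
\end{equation}

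\emph{Transition probabilities --- the main step.} The goal here is $\tr(\psi(P)\psi(Q))=\tr(PQ)$ for all rank-one projections $P,Q$. Fix a rank-one projection $Q$, set $m=\dim H$, and for $s\in\,]0,1[\,$ with $s\neq 1/m$ consider the positive definite probe state $B_{Q,s}=sQ+\tfrac{1-s}{m-1}(I-Q)$. Then $f(B_{Q,s})^2=f(s)^2Q+f\!\big(\tfrac{1-s}{m-1}\big)^2(I-Q)$ is a nontrivial linear combination of $Q$ and $I$ ($f$ being injective), so by \eqref{P:key} the map $P\mapsto\tr\!\big(f(\phi(B_{Q,s}))^2\psi(P)\big)$ is an affine function of $\tr(PQ)$ with nonzero slope; in particular $\psi$ is injective, since equal images would force equal transition probability with every $Q$. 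The decisive --- and, I expect, hardest --- step is to show that $\phi(B_{Q,s})$ has the same shape as $B_{Q,s}$, i.e. $\phi(B_{Q,s})=B_{\psi(Q),\,s}$: that $f(\phi(B_{Q,s}))^2$, hence $\phi(B_{Q,s})$, has exactly two distinct eigenvalues with the non-degenerate one carried by the line $\operatorname{ran}\psi(Q)$, and that this eigenvalue equals $s$. The obstacle is that $\phi$ is not assumed surjective, so one cannot transplant extremal characterisations of states along $\phi$; instead one has to argue directly from \eqref{P:key}, used for \emph{all} rank-one $P$ at once, to force $f(\phi(B_{Q,s}))^2\in\operatorname{span}\{I,\psi(Q)\}$, and then compare the scalars $D'_{f,g}(B_{Q,s}\Vert B_{Q,s'})$ with $D'_{f,g}(\phi(B_{Q,s})\Vert\phi(B_{Q,s'}))$ as $s,s'$ vary, the strict convexity of $f$ and the strict convexity or strict concavity of $g$ being what makes the resulting scalar functional equations rigid enough to pin down both the eigenvector and the eigenvalue. (An alternative route for this step would be an induction on $\dim H$, since $\phi$ carries the density operators supported in a fixed $(m-1)$-dimensional subspace onto those supported in another such subspace, preserving $D'_{f,g}$.) Once $\phi(B_{Q,s})=B_{\psi(Q),s}$ is known, \eqref{P:key} with the probe built from $Q$ gives, after cancelling the nonzero coefficient $f(s)^2-f\!\big(\tfrac{1-s}{m-1}\big)^2$, the identity $\tr(\psi(P)\psi(Q))=\tr(PQ)$.

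\emph{Conclusion.} As $\psi$ preserves transition probability on all pairs of rank-one projections, the version of Wigner's theorem that does not require surjectivity provides a unitary or antiunitary operator $U$ on $H$ with $\psi(P)=UPU^*$ for every rank-one $P$. Since unitary and antiunitary similarities preserve $D'_{f,g}$, the map $A\mapsto U^*\phi(A)U$ is again a $D'_{f,g}$-preserver and fixes every rank-one projection, so we may assume $\phi(P)=P$ for all rank-one $P$. For arbitrary $A\in\S$, arguing as in the first part gives $\supp\phi(A)=\supp A=:V$, and $D'_{f,g}(P\Vert\phi(A))=D'_{f,g}(P\Vert A)$ together with injectivity of $g$ gives $\tr(f(\phi(A)|_V)^2P)=\tr(f(A|_V)^2P)$ for every rank-one $P$ with $\supp P\subset V$; hence the positive operators $f(\phi(A)|_V)^2$ and $f(A|_V)^2$ on $V$ agree, and taking positive square roots (using $f>0$) and then injectivity of $f$ forces $\phi(A)|_V=A|_V$, i.e. $\phi(A)=A$. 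Thus $\phi(A)=UAU^*$ for all $A\in\S$, as claimed.
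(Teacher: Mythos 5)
Your skeleton is sound and in its outer layers coincides with the paper's argument: rank preservation via finiteness of $D'_{f,g}$, the identity $D'_{f,g}(P\Vert B)=g\bigl(\tr f(B|_{\supp B})^2P\bigr)$ for rank-one $P$ with $\supp P\subset\supp B$, the reduction to Wigner's non-surjective theorem, and the final step recovering $\phi(A)=A$ from $f^2(\phi(A)|_{\supp A})=f^2(A|_{\supp A})$ are all correct and essentially what the paper does. But the proof has a genuine gap exactly where you flag it: the claim $\phi(B_{Q,s})=B_{\psi(Q),s}$ is asserted, not proved. You correctly identify the obstruction --- since $\phi$ is not surjective, equation \eqref{P:key} only constrains $\tr\bigl(f(\phi(B_{Q,s}))^2R\bigr)$ for $R$ in the (unknown) range of $\psi$, so one cannot directly force $f(\phi(B_{Q,s}))^2$ into $\operatorname{span}\{I,\psi(Q)\}$, let alone identify the eigenvalue as $s$ --- but the remedy you offer ("compare the scalars $D'_{f,g}(B_{Q,s}\Vert B_{Q,s'})$ \ldots the strict convexity \ldots being what makes the resulting scalar functional equations rigid enough") is a hope, not an argument. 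This is precisely the hard core of the theorem, and the hypotheses you have not yet used (strict convexity of $f$, strict convexity or concavity of $g$) must enter here in a specific way.

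For comparison, the paper closes this gap by first working in $\dim H=2$ and proving three things in sequence: (a) $\phi$ can only enlarge the interval $[\min\sigma(B),\max\sigma(B)]$, because as $R$ ranges over rank-one projections $D'_{f,g}(R\Vert B)$ sweeps exactly $[g(f^2(\l)),g(f^2(1-\l))]$ with $\l=\max\sigma(B)$; (b) $\phi(I/2)=I/2$, via a fixed-point argument that iterates $\phi$, shows $\tr P_1\phi(R)$ is a constant $\vartheta\ne 0,1$ independent of $R$, uses a matrix computation to force $P_2\in\{P_1,Q_1\}$, and then invokes strict convexity of $f^2$ to show the equation $\vartheta f^2(t)+(1-\vartheta)f^2(1-t)=f^2(1/2)$ has at most two roots, so $\l_2=\l_1$ and injectivity finishes; (c) spectrum preservation for rank-two states by comparing $D'_{f,g}(A\Vert I/2)$ with $D'_{f,g}(\phi(A)\Vert I/2)$ through the function $h(t)=g(f^2(1/2)t)+g(f^2(1/2)(1-t))$, which is where the strict convexity/concavity of $g$ is used. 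Only then does orthogonality and transition-probability preservation follow, and the general dimension is handled by restricting to two-dimensional supports. Without an argument of comparable substance replacing your "decisive step," the proposal does not constitute a proof.
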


Clearly, the former statement implies that the transformations on $\S$ which preserve the quantum R\'enyi entropy with parameter $\alpha\in \left]0,1\right[$ are implemented by unitary or antiunitary operators. The latter statement does the same job for the case where $\alpha\in \left]1,\infty\right[$.
Therefore, we have the following immediate corollary.


\begin{Corollary}
If $\alpha \in \left]0,1\right[ \cup \left]1,\infty\right[$ and $\phi\colon \S\rightarrow \S$ is a transformation satisfying
\[
D_\alpha(\phi(A)||\phi(B))=D_\alpha(A||B)\quad (A,B\in \S)
\]
then there is either a unitary or an antiunitary operator $U$ on $H$ such
that $\phi$ is of the form
\[
 \phi(A)=UA U^*\qquad (A\in \S).
\]
\end{Corollary}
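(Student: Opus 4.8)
The plan is to deduce the Corollary directly from Theorems~\ref{T:2} and~\ref{T:3}. The only work is to recognize the quantum R\'enyi divergence as a strictly monotone function of one of the quantities $D'_{f,g}$ for an appropriate choice of $f$ and $g$, and then to check that this choice falls under the hypotheses of one of the two theorems.

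\textbf{Step 1: writing $D'_\alpha$ as a $D'_{f,g}$.} Fix $\alpha\in\left]0,1\right[\cup\left]1,\infty\right[$ and set $f(t)=t^{(1-\alpha)/(2\alpha)}$ and $g(t)=t^\alpha$. For $A\in B(H)^+$ and $B\in B(H)^{++}$ we then have $f(B)Af(B)=B^{(1-\alpha)/(2\alpha)}AB^{(1-\alpha)/(2\alpha)}$, so that $D'_{f,g}(A\Vert B)=\tr\bigl(B^{(1-\alpha)/(2\alpha)}AB^{(1-\alpha)/(2\alpha)}\bigr)^\alpha=D'_\alpha(A\Vert B)$. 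When $\alpha\in\left]0,1\right[$ the exponent $(1-\alpha)/(2\alpha)$ is positive, hence $f$ has limit $0$ at $0$, and Proposition~\ref{P}\,(i) shows that the regularized quantity \eqref{E:limit} exists and equals the right-hand side of \eqref{E:dformula1}. When $\alpha\in\left]1,\infty\right[$ the exponent is negative, $f$ has limit $\infty$ at $0$, while $g$ is monotone increasing with limit $\infty$ at $\infty$; Proposition~\ref{P}\,(ii) then shows that \eqref{E:limit} reproduces \eqref{E:dformula2}, the value $\infty$ occurring precisely when $\supp A\not\subset\supp B$. In both cases the restriction of $D'_{f,g}$ to $\S$ coincides with $D'_\alpha$.

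\textbf{Step 2: from $D_\alpha$ to $D'_\alpha$.} For $A,B\in\S$ we have $\tr A=1$, so that the definition of $D_\alpha$ reads $D_\alpha(A\Vert B)=(\alpha-1)^{-1}\log D'_\alpha(A\Vert B)$, with the convention that the right-hand side equals $\infty$ exactly when $D'_\alpha(A\Vert B)\in\{0,\infty\}$; by Step~1 these are precisely the situations in which the defining formula of $D_\alpha$ prescribes the value $\infty$ (that is, $\supp A\perp\supp B$ for $\alpha<1$, and $\supp A\not\subset\supp B$ for $\alpha>1$). Since $x\mapsto(\alpha-1)^{-1}\log x$ is a strictly monotone bijection of $\left]0,\infty\right[$ onto $\R$, it follows that a transformation $\phi$ of $\S$ preserves $D_\alpha$ if and only if it preserves $D'_\alpha=D'_{f,g}$.

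\textbf{Step 3: invoking the main theorems.} Finally we check the hypotheses. If $\alpha\in\left]0,1\right[$, then $f$ is continuous with $\lim_{\varepsilon\searrow0}f(\varepsilon)=0$ and $g(t)=t^\alpha$ is an injective continuous function with $g(0)=0$, so Theorem~\ref{T:2} yields a unitary or antiunitary operator $U$ on $H$ with $\phi(A)=UAU^*$ for all $A\in\S$. If $\alpha\in\left]1,\infty\right[$, then $f$ is strictly monotone decreasing and strictly convex (indeed $f''$ is a positive multiple of $t^{(1-\alpha)/(2\alpha)-2}$) with $\lim_{\varepsilon\searrow0}f(\varepsilon)=+\infty$, while $g(t)=t^\alpha$ is strictly monotone increasing, strictly convex, continuous, satisfies $g(0)=0$ and has limit $\infty$ at $\infty$; hence Theorem~\ref{T:3} applies and gives the same conclusion. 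Together with Step~2 this proves the Corollary. There is essentially no serious obstacle in this argument, as the Corollary is merely a specialization of the theorems; the only place requiring a bit of attention is Step~1, where one must confirm that the limit prescription \eqref{E:1} underlying $D'_{f,g}$ agrees with the support conventions hard-wired into \eqref{E:dformula1}--\eqref{E:dformula2}, and that the resulting finite/$\infty$ dichotomy matches the support conditions in the definition of $D_\alpha$ --- and Proposition~\ref{P} is precisely the tool that takes care of this bookkeeping.
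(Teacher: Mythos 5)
Your proposal is correct and matches the paper's intent exactly: the authors state the Corollary as an immediate consequence of Theorems~\ref{T:2} and~\ref{T:3}, implicitly using the same specialization $f(t)=t^{(1-\alpha)/(2\alpha)}$, $g(t)=t^{\alpha}$ and the fact that on $\S$ the quantity $D_\alpha$ is a strictly monotone (hence injective) function of $D'_{f,g}$. You have merely written out the hypothesis-checking and the finite/infinite bookkeeping that the paper leaves to the reader, and all of it is accurate.
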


We emphasize that the bijectivity of the transformation $\phi$ is not assumed in the previous statements. However, we shall see that in the proofs we seriously use the fact that $\phi$ is a transformation mapping density operators to density operators. 
In certain investigations in quantum theory, especially where differential geometrical tools are applied, it is more natural to consider all positive (definite) operators and not only the densities normalized by the unit trace condition.
Concerning that setting we have the following result. We point out that below we consider transformations on the cone of all positive definite operators on $H$ but we need to assume their bijectivity. Moreover, we have the statement only for the quantum R\'enyi divergences, not for any more general numerical quantities. We note that, as we shall see, the proof of the following result is very much different from the proofs of Theorems~\ref{T:2} and \ref{T:3}. 


\begin{Theorem}\label{T:4}
Let $\alpha \in \left]0,1\right[ \cup \left]1,\infty\right[$. If $\phi\colon B(H)^{++} \rightarrow B(H)^{++}$ is a bijective transformation satisfying
\[
D_\alpha(\phi(A)||\phi(B))=D_\alpha(A||B)\quad (A,B\in B(H)^{++})
\]
then there is either a unitary or an antiunitary operator $U$ on $H$ and a scalar $c \in \mathbb{R}^+$ such
that $\phi$ is of the form
\[
 \phi(A)=cUA U^*\quad (A\in B(H)^{++}).
\]
\end{Theorem}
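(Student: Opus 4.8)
My plan is to exploit the elementary scaling behaviour of the quantum R\'enyi divergence in order to bring the problem down to the already settled case of density operators. A direct computation from the definition gives, for $A,B\in B(H)^{++}$ and $c,d\in\mathbb R^+$,
\[
D_\alpha(cA\Vert dB)=D_\alpha(A\Vert B)+\log (c/d);
\]
in particular $D_\alpha(cA\Vert cB)=D_\alpha(A\Vert B)$, $D_\alpha(A\Vert A)=0$, and, writing $\hat A=(\tr A)^{-1}A\in\S$, $D_\alpha(A\Vert B)=D_\alpha(\hat A\Vert\hat B)+\log\tr A-\log\tr B$. Hence
\[
D_\alpha(A\Vert B)+D_\alpha(B\Vert A)=D_\alpha(\hat A\Vert\hat B)+D_\alpha(\hat B\Vert\hat A),
\]
and since the quantum R\'enyi divergence of density operators is nonnegative and vanishes only for equal arguments, the quantity $D_\alpha(A\Vert B)+D_\alpha(B\Vert A)$ is nonnegative and equals $0$ exactly when $\hat A=\hat B$, i.e.\ when $B$ is a positive scalar multiple of $A$. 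Thus $\phi$ (and, by bijectivity, $\phi^{-1}$) maps rays of $B(H)^{++}$ into rays, so it permutes them; consequently there are a bijection $\psi$ of $\S\cap B(H)^{++}$ and a function $c\colon\S\cap B(H)^{++}\to\mathbb R^+$ with
\[
\phi(t\rho)=t\,c(\rho)\,\psi(\rho)\qquad(t>0,\ \rho\in\S\cap B(H)^{++}),
\]
the linear dependence on $t$ being forced by evaluating $D_\alpha(\phi(t\rho)\Vert\phi(s\rho))=D_\alpha(t\rho\Vert s\rho)$. In particular $\phi$ is positively homogeneous and $c(\rho)=\tr\phi(\rho)$.

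Substituting this form into the invariance hypothesis and using the scaling law once more, one gets for all $\rho,\sigma\in\S\cap B(H)^{++}$
\begin{equation}\label{E:star4}
D_\alpha(\psi(\rho)\Vert\psi(\sigma))=D_\alpha(\rho\Vert\sigma)-\log c(\rho)+\log c(\sigma).
\end{equation}
Since the left side of \eqref{E:star4} is nonnegative, $\log c(\rho)-\log c(\sigma)\le D_\alpha(\rho\Vert\sigma)$ and, symmetrically, $\log c(\sigma)-\log c(\rho)\le D_\alpha(\sigma\Vert\rho)$; as $D_\alpha(\rho\Vert\sigma)\to 0$ when $\sigma\to\rho$ (joint continuity of $D_\alpha$ on the full-rank states), $\log c$ is continuous, and a short argument using \eqref{E:star4}, the fact that $\psi$ is a homeomorphism of $\S\cap B(H)^{++}$, and the asymptotics of $D_\alpha(\rho\Vert\sigma)$ when $\rho$ or $\sigma$ tends to the boundary of $\S$ shows that $\log c$ is bounded.

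The decisive and, I expect, hardest point is to show that $c$ is actually constant; this is where bijectivity is used in an essential way and where the argument departs from those of Theorems~\ref{T:2} and~\ref{T:3}. Here one iterates \eqref{E:star4} along the orbits of $\psi$: nonnegativity of $D_\alpha$ forces the partial sums $\sum_{k=0}^{n-1}\bigl(\log c(\psi^k\rho)-\log c(\psi^k\sigma)\bigr)$ to stay bounded uniformly in $n$, and combining this with the continuity of $\log c$, the bijectivity of $\psi$, and the explicit structure of $D_\alpha$ (for instance $D_\alpha(|x\rangle\langle x|\,\Vert\,\sigma)=\tfrac{\alpha}{\alpha-1}\log\langle\sigma^{\frac{1-\alpha}{\alpha}}x,x\rangle$ on pure states, together with the extremal identity $\sup_{\rho}D_\alpha(\rho\Vert\sigma)=-\log\l_{\min}(\sigma)$) one excludes every nonconstant $c$. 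Once $\log c\equiv\text{const}$, \eqref{E:star4} reduces to $D_\alpha(\psi(\rho)\Vert\psi(\sigma))=D_\alpha(\rho\Vert\sigma)$, so $\psi$ preserves the quantum R\'enyi divergence on the full-rank density operators.

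To conclude, I would extend $\psi$ to all of $\S$: using the compactness of $\S$, the joint lower semicontinuity of $D_\alpha$, the continuity of $D_\alpha$ in each slot for a fixed full-rank argument, and the fact that $D_\alpha(\tau\Vert\,\cdot\,)=D_\alpha(\tau'\Vert\,\cdot\,)$ on the full-rank states implies $\tau=\tau'$, one checks that $\psi(\rho_n)$ converges whenever $\rho_n$ does, so $\psi$ extends to a transformation of $\S$ still preserving $D_\alpha$ (with the convention $\infty=\infty$). By the Corollary (which itself follows from Theorems~\ref{T:2} and~\ref{T:3} applied with $f(t)=t^{\frac{1-\alpha}{2\alpha}}$ and $g(t)=t^{\alpha}$), there is a unitary or antiunitary operator $U$ on $H$ with $\psi(\rho)=U\rho U^*$ for all $\rho\in\S$, in particular for $\rho\in\S\cap B(H)^{++}$. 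Together with $\phi(A)=(\tr A)\,c\,\psi(\hat A)$, where $c$ now denotes the constant value, this gives $\phi(A)=c\,UAU^*$ for every $A\in B(H)^{++}$, as asserted.
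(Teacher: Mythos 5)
Your reductions up to equation \eqref{E:star4} are correct and verifiable: the scaling law $D_\alpha(cA\Vert dB)=D_\alpha(A\Vert B)+\log(c/d)$, the ray-preservation via $D_\alpha(A\Vert B)+D_\alpha(B\Vert A)$ (granting the known positive definiteness of $D_\alpha$ on states, which is not proved in this paper and should be cited, e.g.\ to \cite{Muletal13}), and the resulting form $\phi(t\rho)=t\,c(\rho)\psi(\rho)$ with the cocycle identity \eqref{E:star4}. But the proof stops being a proof exactly at the point you yourself flag as decisive: the constancy of $c$. The orbit-iteration sketch does not establish it. From \eqref{E:star4} and $D_\alpha\ge 0$ you only get that the partial sums $\sum_{k=0}^{n-1}\bigl(\log c(\psi^k\sigma)-\log c(\psi^k\rho)\bigr)$ are bounded by $\max\{D_\alpha(\rho\Vert\sigma),D_\alpha(\sigma\Vert\rho)\}$ uniformly in $n$; boundedness of such sums is perfectly compatible with nonconstant $c$ (the increments could tend to zero along orbits), and no mechanism is given that converts this, together with the extremal identity $\sup_\rho D_\alpha(\rho\Vert\sigma)=-\log\l_{\min}(\sigma)$, into a contradiction. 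Even the preliminary claim that $\log c$ is bounded is asserted, not proved: a priori $\tr\phi(\rho)$ can blow up as $\rho$ approaches the boundary of the full-rank states, and nothing in the invariance hypothesis obviously prevents this. Since the entire point of Theorem~\ref{T:4} beyond the Corollary is precisely to control this scaling degree of freedom, leaving it at the level of a strategy means the theorem is not proved.

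A secondary gap is the extension step: $\psi$ is defined only on the invertible densities, while the Corollary requires a $D_\alpha$-preserving map on all of $\S$. Uniqueness of limit points of $\psi(\rho_n)$ can indeed be handled by testing against full-rank second arguments, but you must also verify the preserver property for pairs of singular states, where $D_\alpha$ is only lower semicontinuous and may be $+\infty$; this is glossed over. For comparison, the paper avoids both difficulties entirely: using Lemma~\ref{lemma:1} it shows that $X\mapsto\phi\bigl(X^{\frac{\alpha}{1-\alpha}}\bigr)^{\frac{1-\alpha}{\alpha}}$ is an order automorphism of $B(H)^{++}$, invokes the structure theorem of \cite{MOrd} to get $TXT^*$, reduces $T$ to a positive definite operator by polar decomposition, and then forces $T$ to be a scalar by a trace identity combined with Tchebychef's sum inequality; the constant $c$ appears at the very end rather than as a gauge function to be shown constant. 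If you want to salvage your route, the missing ingredient is an actual argument (not an outline) that the cocycle $\log c(\sigma)-\log c(\rho)$ in \eqref{E:star4} vanishes identically; at present that is the theorem itself in disguise.
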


Of course, the natural question immediately arises that what happens if we omit the condition of bijectivity of the transformation above. We leave this as a probably highly nontrivial open problem.

\section{Proofs}

In this section we present the proofs of our results.
To the proof of Proposition~\ref{P:1} we need the following lemma.

\begin{Lemma}\label{L:fu}
Let $n\geq 2$ be an integer and $f:\mathbb{R}^{+}\to \mathbb R$ a function with the property that
\begin{equation}\label{E:fu}
\sum_{k=1}^n b_kf\left (\frac{a_k}{b_k}\right)=0
\end{equation}
holds whenever $a_1,\ldots, a_n$ and $b_1,\ldots, b_n$ are positive numbers such that $\sum_{k=1}^n a_k = \sum_{k=1}^n b_k=1$. Then there is a real number $c$ for which we have
$f(t)=c(t-1)$ $(t\in \mathbb{R}^{+})$.
\end{Lemma}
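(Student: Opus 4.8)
The plan is to extract from the functional equation enough linear constraints on $f$ to pin it down to the two-dimensional space of affine functions, and then eliminate the constant term. First I would reduce to the case $n=2$: given positive $a_1,a_2,b_1,b_2$ with $a_1+a_2=b_1+b_2=s$, rescale by $s$ (noting the equation is genuinely about ratios $a_k/b_k$ weighted by $b_k$) to get $b_1 f(a_1/b_1)+b_2 f(a_2/b_2)=0$ whenever the normalizations hold. To relate a general $n$ to $n=2$, one can take $a_3=b_3,\dots,a_n=b_n$ and observe that this forces $f(1)=0$ (choosing all $a_k=b_k$) so the tail terms contribute $b_k f(1)=0$, collapsing the $n$-term identity to the two-term one. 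Hence it suffices to work with the relation
\[
b f\!\left(\frac{a}{b}\right) + (1-b) f\!\left(\frac{1-a}{1-b}\right) = 0
\qquad (0<a<1,\ 0<b<1).
\]

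Next I would exploit the freedom in $a$ and $b$ separately. Fix $b$ and let $a$ vary; substitute $x=a/b$, $y=(1-a)/(1-b)$, which traces out a line segment, to see that $f$ restricted to such segments is forced to be affine. More concretely, pick three collinear points and write down the resulting homogeneous linear system; the cleanest route is: set $u=a/b$ and $v=(1-a)/(1-b)$, so $a = bu$ and $1-a = (1-b)v$, giving the constraint $bu+(1-b)v=1$, i.e. $v=(1-bu)/(1-b)$. Then $f(u) = -\frac{1-b}{b} f\!\left(\frac{1-bu}{1-b}\right)$ for all admissible $u,b$. Differentiating in $u$ heuristically suggests $f'$ is constant, but to avoid regularity assumptions I would instead plug in a second value $b'$ and compare, deriving a Cauchy-type relation. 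A convenient device: for fixed $t>0$ choose $b$ so that $a/b = t$ is possible, i.e. $0<t b<1$; then $f(t) = -\frac{1-b}{b} f\!\left(\frac{1-tb}{1-b}\right)$, and letting $b\to 0^+$ along with a companion substitution produces the identity $f(t)-f(1) = (t-1)(f(t_0)-f(1))/(t_0-1)$ for any reference $t_0\neq 1$, which is exactly affinity with the right slope and with $f(1)=0$ already known. Setting $c = (f(t_0)-f(1))/(t_0-1) = f(t_0)/(t_0-1)$ gives $f(t)=c(t-1)$.

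The step I expect to be the main obstacle is the passage from the functional equation to affinity \emph{without} assuming continuity or differentiability of $f$ — the hypothesis only says $f\colon\mathbb R^+\to\mathbb R$ with no regularity. One must therefore produce the affine form purely algebraically, presumably by showing that for arbitrary $s,t>0$ the three values $f(s)$, $f(t)$, $f$ at an intermediate point satisfy a collinearity relation obtained by a clever choice of the $a_k,b_k$, and then invoking that a function whose graph has every secant determined by a single fixed slope must be affine. I would organize this as: (1) $f(1)=0$; (2) a symmetry/scaling identity; (3) for each pair $s<t$ and each $\lambda\in(0,1)$, exhibit $a_k,b_k$ realizing $\lambda s + (1-\lambda) t$ as a weighted configuration, forcing $f(\lambda s+(1-\lambda)t)=\lambda f(s)+(1-\lambda)f(t)$ (Jensen-type equality on \emph{all} of $\mathbb R^+$, not just rational $\lambda$); (4) conclude $f$ is affine, hence $f(t)=c(t-1)$ using $f(1)=0$. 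Care is needed in (3) to keep all the $a_k,b_k$ strictly positive with the correct sums, which is where the ability to use $n\ge 2$ rather than exactly $n=2$ gives extra room to maneuver.
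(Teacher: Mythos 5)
Your overall strategy --- reduce to the two-term identity $bf(a/b)+(1-b)f\bigl((1-a)/(1-b)\bigr)=0$ for $0<a,b<1$ and then extract affinity by purely algebraic manipulations --- is viable, and once completed it would be more elementary than the paper's argument (the paper first proves that $f$ is continuously differentiable, via a reparametrization by $\lambda$ and an inverse-function argument, and only then differentiates the identity; no regularity bootstrap is needed on your route). The reduction itself and the observation $f(1)=0$ are fine and coincide with the paper's opening steps. The problem is that at the decisive step you offer three alternative routes and carry none of them out; the only one you make concrete is invalid. "Letting $b\to 0^+$" in $f(t)=-\tfrac{1-b}{b}f\bigl(\tfrac{1-tb}{1-b}\bigr)$ requires first-order control of $f$ near $1$, i.e.\ precisely the differentiability you correctly say cannot be assumed; without it the right-hand side is an indeterminate product of a factor tending to $\infty$ with $f$ evaluated at points tending to $1$, and the claimed identity $f(t)-f(1)=(t-1)\bigl(f(t_0)-f(1)\bigr)/(t_0-1)$ does not follow. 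Your step (3) (full Jensen equality) is also only asserted, and it is subtler than you suggest: in the substitution $u=a/b$, $v=(1-a)/(1-b)$ the identity reads $bf(u)+(1-b)f(v)=0$ subject to $bu+(1-b)v=1$, so the hypothesis gives Jensen equality only for convex combinations whose barycentre is $1$; upgrading to arbitrary barycentres needs an extra device (e.g.\ comparing the configurations with weights $\mu\lambda,\mu(1-\lambda),1-\mu$ on $s,t,w$ and weights $\mu,1-\mu$ on $\lambda s+(1-\lambda)t,\ w$, sharing an auxiliary point $w$), which you do not supply.

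The good news is that your "companion substitution" can be made rigorous with no limits at all. For $t\neq 1$ and any $y$ on the opposite side of $1$, the equation $bt+(1-b)y=1$ has the unique solution $b=(y-1)/(y-t)\in\left]0,1\right[$, and substituting into $bf(t)+(1-b)f(y)=0$ yields $f(t)/(t-1)=f(y)/(y-1)$. Fixing one $y_0>1$ shows that $f(x)/(x-1)$ equals a constant $c$ for all $x<1$; fixing one $x_0<1$ then extends this to all $y>1$; together with $f(1)=0$ this gives $f(t)=c(t-1)$ everywhere. If you replace the $b\to 0^+$ passage by this computation the proof is complete and genuinely different from (and shorter than) the published one. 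One small further correction: the hypothesis is for a single fixed $n$, so "using $n\ge 2$ for extra room to maneuver" is not available; for $n>2$ the extra slots must simply be filled with $a_k=b_k$, contributing $b_kf(1)=0$, exactly as both you and the paper do.
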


\begin{proof}
Choosing $a_k=b_k=1/n$ $(k=1,\ldots, n)$ we have $f(1)=0$. Let $t,s$ be positive real numbers less than, say, 0.9. Then, by the given property of $f$, we have
\begin{equation*}
tf\left(\frac{s}{t}\right)+
(0.9-t)f\left( \frac{0.9-s}{0.9-t}\right)+
\sum_{k=1}^{n-2}\frac{0.1}{n-2}f(1)=0.
\end{equation*}
(Observe that if $n=2$, then there is no need for the above "trick", instead of the number 0.9 we can choose 1 and the last sum in the above displayed equation does not show up.)
Since $f(1)=0$, it follows that
\begin{equation}\label{E:fu3}
tf\left( \frac{s}{t}\right)+
(0.9-t)f\left( \frac{0.9-s}{0.9-t}\right)=0
\end{equation}
for any real numbers $0<t,s<0.9$. Now fix $t,s$ such that $(0.9-s)/(0.9-t)=x$ be an arbitrarily preassigned positive number. Then for every real number $\lambda$ from a small enough neighborhood of $1$ we have
\begin{equation}\label{E:fu2}
\lambda t f\left( \frac{s}{t}\right) +(0.9-\lambda t)f\left( \frac{0.9-\lambda s}{0.9-\lambda t}\right)=0.
\end{equation}
The function 
\[
\lambda\mapsto \frac{0.9-\lambda s}{0.9-\lambda t}
\]
is strictly monotone and hence invertible with continuously differentiable inverse in a small neighborhood of $1$. From \eqref{E:fu2} we deduce that $f$ is continuously differentiable in a neighborhood of $x$. Since $x$ was an arbitrary positive number, it follows that $f$ is continuously differentiable on $\mathbb R^+$. Going back to \eqref{E:fu3} and differentiating with respect the variable $s$ we have
\begin{equation}\label{E:fu4}
f'\left( \frac{s}{t}\right)+
f'\left( \frac{0.9-s}{0.9-t}\right)=0
\end{equation}
for any $0<t,s<0.9$. Again, choosing particular $t,s$ such that $(0.9-s)/(0.9-t)=x$ is an arbitrarily preassigned positive number, and replacing $t,s$ by $\lambda t, \lambda s$ for $\lambda$ close enough to $1$, we see from \eqref{E:fu4} that $f'$ is constant in a neighborhood of $x$. Therefore, the continuous function $f'$ is locally constant which implies that it is globally constant. We deduce that $f$ is of the form $f(t)=ct+d$ $(t\in \mathbb R^+)$ and then by the property \eqref{E:fu} it follows easily that $d=-c$. This completes the proof of the lemma.
\end{proof}


Now we can present the proof of Proposition~\ref{P:1}.
Below we shall frequently use the following notation. For any vectors $x,y\in H$ we define the operator $x\otimes y\in B(H)$ by $(x\otimes y)z=\langle z,y\rangle x$ $(z\in H)$. It is apparent that $P\in B(H)$ is a rank-one projection if and only if there is a unit vector $x\in H$ such that $P=x\otimes x$. Elementary computation rules concerning the operation $\otimes$ are the following. For any $A\in B(H)$, $x,y\in H$ we have
\[
\begin{gathered}
A \cdot x\otimes y=(Ax)\otimes y\\
x\otimes y \cdot A=x\otimes (A^* y)\\
\tr (x\otimes y)=\langle x,y\rangle.
\end{gathered}
\]

\begin{proof}[Proof of Proposition~\ref{P:1}] 
Assume that for a given positive number $\alpha$ which is different from 1, $D'_\alpha$ is a quantum $f$-divergence, where $f:\mathbb R_0^+\to \R$ is function which is continuous on $\mathbb R^+$ and the limit $\gamma =\lim_{t\to \infty} f(t)/t$ exists in the extended sense. Let $n=\dim H$. For any positive numbers $a_1,\ldots, a_n$ and $b_1,\ldots, b_n$ with $\sum_{k=1}^n a_k = \sum_{k=1}^n b_k=1$ choose an orthonormal basis in $H$ and consider $A,B\in S(H)$ whose matrices with respect to that basis are $\Diag[a_1,\ldots, a_n]$ and $\Diag[b_1,\ldots, b_n]$, respectively. By $D'_\alpha (A\Vert B)=S_f(A\vert B)$ we have
\begin{equation*}
\sum_{k=1}^n b_k \left( \frac{a_k}{b_k}\right)^\alpha=
\sum_{k=1}^n b_k f\left( \frac{a_k}{b_k}\right).
\end{equation*}
Therefore, by Lemma \ref{L:fu} it follows that 
\begin{equation}\label{E:fu5}
f(t)=t^\alpha +c(t-1) \quad (t>0)
\end{equation}
holds for some real number $c$.
Substituting the matrices $\Diag[1,0,\ldots, 0]$ and $\Diag[b_1,\ldots, b_n]$ with positive diagonal entries into the formulas \eqref{E:formula} and \eqref{E:dformula1} or \eqref{E:dformula2}, we obtain
\[
b_1^{1-\alpha}=b_1f\left( \frac{1}{b_1}\right)+b_2f(0)+\ldots +b_n f(0)=b_1f\left( \frac{1}{b_1}\right)+(1-b_1)f(0).
\]
Taking the form in \eqref{E:fu5} into consideration we easily obtain $f(0)=-c$ and hence we have $f(t)=t^\alpha +c(t-1)$ for every $t\geq 0$.

Observe further, that in the case where $\alpha <1$ we have $\gamma=c$, while in the case where $\alpha>1$ we have $\gamma =\infty$. It is now not difficult to verify  (we omit the details) that for $\alpha <1$ we have $S_f(A\Vert B)=\tr A^\alpha B^{1-\alpha}$ $(A,B\in S(H))$ and for $\alpha >1$ we have
\[
S_f(A\Vert B)=\left\{
       \begin{array}{ll}
        \tr A^{\alpha}B^{1-\alpha}, & \supp A\subset\supp B \\
        \infty, & \mbox{otherwise}
       \end{array}
     \right.
\]
for any $A,B\in B(H)$. Cf. example (ii) in the introduction.

It follows that for any invertible density operator $B\in S(H)$ and arbitrary density operator $A\in S(H)$ we have 
\[
\tr A^\alpha B^{1-\alpha}=\tr \left(B^\frac{1-\alpha}{2\alpha}AB^\frac{1-\alpha}{2\alpha}\right)^{\alpha}.
\]
Substituting any rank-one projection $P=x\otimes x$ into the place of $A$ ($x\in H$ is an arbitrary unit vector), the above displayed equality implies that
\[
\langle B^{1-\alpha} x,x\rangle=\| B^\frac{1-\alpha}{2\alpha}x\|^{2\alpha}=\langle B^\frac{1-\alpha}{\alpha}x,x\rangle ^{\alpha}
\]
holds for any invertible density operator $B\in S(H)$ and unit vector $x\in H$. Considering the spectral decomposition $B=\sum_{b\in \sigma(B)} bQ_b$ of $B$, it follows that
\[
\sum_{b\in\sigma(B)} b^{1-\alpha} \langle Q_bx,x\rangle=
\left( \sum_{b\in\sigma(B)} b^\frac{1-\alpha}{\alpha} \langle Q_bx,x\rangle \right) ^{\alpha}.
\]
Here the only constraint regarding the numbers $\langle Q_bx,x\rangle$ $(b\in \sigma(B))$ is that they are non-negative and their sum is 1. In particular, for $0\leq t,s\leq 1/2$ we have
\[
\frac{t^{1-\alpha}+s^{1-\alpha}}{2}=\left( \frac{t^\frac{1-\alpha}{\alpha}+s^\frac{1-\alpha}{\alpha}}{2}\right)^{\alpha}
\]
which, by the strict convexity/concavity of the function $t\mapsto t^{1/\alpha}$ $(t\geq 0)$, leads to a contradiction. This proves our first proposition.
\end{proof}

Next we present two useful lemmas and then prove that the quantity $D'_{f,g}$ defined in \eqref{E:1} is well-defined when the functions $f,g$ satisfy certain conditions.

\begin{Lemma}\label{L:1}
Assume that $h: \mathbb{R}^+_0 \to \mathbb{R}^+_0$ is a continuous function. Then we have
\begin{equation*} \label{unieq}
\tr h(BAB) = \tr h\left(\sqrt{A}B^2\sqrt{A}\right) \quad (A,B\in B(H)^+).
\end{equation*}
\end{Lemma}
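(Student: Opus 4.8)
The plan is to reduce the statement to the elementary fact that for any two operators $X, Y \in B(H)$, the products $XY$ and $YX$ have the same nonzero spectrum, with the same multiplicities. More precisely, if $X, Y \in B(H)$, then $XY$ and $YX$ are similar as operators on the part corresponding to their nonzero eigenvalues, so for any function $h$ that vanishes at $0$ (or, after controlling the kernels, for any continuous $h$ on the relevant spectrum) we have $\tr\, p(XY) = \tr\, p(YX)$ for every polynomial $p$ with $p(0) = 0$, and then for every continuous function vanishing at $0$ by a standard approximation argument.

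First I would write $B A B = (B \sqrt{A})(\sqrt{A}\, B) = X Y$ with $X = B\sqrt{A}$ and $Y = \sqrt{A}\, B = X^*$, so that $BAB = X X^*$, while $\sqrt{A}\, B^2 \sqrt{A} = (\sqrt{A}\, B)(B \sqrt{A}) = Y X = X^* X$. Thus the claim is exactly that $\tr\, h(X X^*) = \tr\, h(X^* X)$ for the positive semidefinite operators $X X^*$ and $X^* X$, where $X = B\sqrt{A}$. This is a well-known fact: $X X^*$ and $X^* X$ have identical nonzero eigenvalues with identical multiplicities (via the polar decomposition, or the singular value decomposition, of $X$), and on a finite dimensional space $H$ the total multiplicity of the eigenvalue $0$ differs only by $\dim\ker X - \dim\ker X^*$, which is zero since $\dim H < \infty$ forces $\operatorname{rank} X = \operatorname{rank} X^*$. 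Hence $X X^*$ and $X^* X$ are unitarily equivalent, and therefore $h(X X^*)$ and $h(X^* X)$ are unitarily equivalent for every continuous $h \colon \mathbb{R}^+_0 \to \mathbb{R}^+_0$, which immediately gives the equality of traces.

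Concretely, I would invoke the singular value decomposition $X = W \Sigma V^*$ with $W, V$ unitary on $H$ and $\Sigma \geq 0$ diagonal; then $X X^* = W \Sigma^2 W^*$ and $X^* X = V \Sigma^2 V^*$, so $h(XX^*) = W h(\Sigma^2) W^*$ and $h(X^*X) = V h(\Sigma^2) V^*$, whence $\tr\, h(XX^*) = \tr\, h(\Sigma^2) = \tr\, h(X^*X)$. Substituting back $X = B\sqrt{A}$ yields $\tr\, h(BAB) = \tr\, h(\sqrt{A}\, B^2 \sqrt{A})$ for all $A, B \in B(H)^+$, as desired. There is essentially no obstacle here; the only mild point to be careful about is that the identity $h(UCU^*) = U h(C) U^*$ for a continuous function applied to a normal (here positive) operator and a unitary $U$ is used, which is routine functional calculus. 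Everything is finite dimensional, so no limiting or domain issues arise.
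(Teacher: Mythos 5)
Your proof is correct and follows essentially the same route as the paper: both arguments reduce the claim to the unitary equivalence of $XX^*$ and $X^*X$ for $X=B\sqrt{A}$ (the paper via the polar decomposition of $\sqrt{A}B$ extended to a unitary, you via the singular value decomposition) and then apply $h(UCU^*)=Uh(C)U^*$. No gaps; the finite-dimensionality is used in the same place in both arguments.
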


\begin{proof}
Observe that for any unitary operator $U\in B(H)$ and $X\in B(H)^+$ we have
$h(UXU^*)=Uh(X)U^*$ which follows from the fact that $h$ can be uniformly approximated by polynomials on compact sets. Hence $\tr h(UXU^*)= \tr h(X)$.
It is now sufficient to show that $BAB$ is unitarily similar to $\sqrt{A}B^2\sqrt{A}$, i.e. there is a unitary operator $U\in B(H)$ such that
\[
BAB = U^* \sqrt{A}B^2\sqrt{A} U.
\]
Considering the polar decomposition of $ \sqrt{A}B $ we have $ \sqrt{A}B = U |\sqrt{A}B | $, where $U$ is a partial isometry. Since $H$ is finite dimensional, any partial isometry can be extended to a unitary operator so we can assume that $U$ is unitary. Then we have
\[
\begin{gathered}
\sqrt{A}B^2\sqrt{A} =U|\sqrt{A}B | (U|\sqrt{A}B |)^* =
U|\sqrt{A}B |^2U^* = UBABU^*
\end{gathered}
\]
and we obtain our statement.
\end{proof}

In the next lemma we present a characterization of the order what we shall also need.

\begin{Lemma} \label{lemma:1}
Assume that $h: \mathbb{R}^+_0 \to \mathbb{R}^+_0$ is strictly monotone increasing continuous function with $h(0)=0$. Then for $B,C\in B(H)^+$ we have
\[
B^2 \leq C^2 \Longleftrightarrow  \tr h(BAB) \leq \tr h(CAC)\quad (A \in B(H)^{++}).
\]
\end{Lemma}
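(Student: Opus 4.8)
\textbf{Proof plan for Lemma~\ref{lemma:1}.}

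The plan is to prove both implications, with the forward direction being elementary and the reverse direction being the substantive one. For the forward implication, assume $B^2 \leq C^2$. Fix $A \in B(H)^{++}$. By Lemma~\ref{L:1} we have $\tr h(BAB) = \tr h(\sqrt A\, B^2 \sqrt A)$ and $\tr h(CAC) = \tr h(\sqrt A\, C^2 \sqrt A)$, so it suffices to compare these. Since $\sqrt A \geq 0$, from $B^2 \leq C^2$ we get $\sqrt A\, B^2 \sqrt A \leq \sqrt A\, C^2 \sqrt A$ (conjugation by a positive, hence self-adjoint, operator preserves the order). Now $h$ is monotone increasing on $\mathbb{R}^+_0$, and the trace of $h$ applied to a positive operator is a monotone functional with respect to the operator order: if $0 \leq X \leq Y$ then $\tr h(X) \leq \tr h(Y)$ because the eigenvalues of $X$ are dominated by those of $Y$ (Weyl monotonicity, i.e. $\lambda_k(X) \leq \lambda_k(Y)$ for every $k$) and $h$ is increasing. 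Hence $\tr h(\sqrt A B^2 \sqrt A) \leq \tr h(\sqrt A C^2 \sqrt A)$, which gives the desired inequality for every $A \in B(H)^{++}$.

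For the reverse implication, I would argue by contraposition. Suppose $B^2 \leq C^2$ fails. Then the self-adjoint operator $C^2 - B^2$ has a negative eigenvalue; pick a unit vector $x \in H$ with $\langle (C^2 - B^2)x, x\rangle < 0$, i.e. $\langle C^2 x, x\rangle < \langle B^2 x, x\rangle$. The idea is to choose $A$ concentrated near the rank-one projection $x \otimes x$ so that $\tr h(BAB)$ is governed essentially by $\langle B^2 x, x\rangle$ and $\tr h(CAC)$ by $\langle C^2 x, x\rangle$, thereby violating $\tr h(BAB) \leq \tr h(CAC)$. Concretely, take $A_\varepsilon = x \otimes x + \varepsilon I \in B(H)^{++}$ for small $\varepsilon > 0$. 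Using Lemma~\ref{L:1}, $\tr h(B A_\varepsilon B) = \tr h(\sqrt{A_\varepsilon}\, B^2 \sqrt{A_\varepsilon})$, and as $\varepsilon \searrow 0$ this converges (by continuity of $h$ and of the functional calculus) to $\tr h(\sqrt{x\otimes x}\, B^2 \sqrt{x\otimes x}) = \tr h((x\otimes x) B^2 (x\otimes x))$. Since $(x\otimes x) B^2 (x\otimes x) = \langle B^2 x, x\rangle\, (x\otimes x)$ is a rank-one positive operator with eigenvalue $\langle B^2 x, x\rangle$, we get $\tr h((x\otimes x)B^2(x\otimes x)) = h(\langle B^2 x, x\rangle)$, using $h(0)=0$ to kill the contribution of the kernel. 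Likewise $\tr h(C A_\varepsilon C) \to h(\langle C^2 x, x\rangle)$. Because $h$ is \emph{strictly} increasing and $\langle C^2 x, x\rangle < \langle B^2 x, x\rangle$, we have $h(\langle C^2 x, x\rangle) < h(\langle B^2 x, x\rangle)$, so for all sufficiently small $\varepsilon$ we obtain $\tr h(C A_\varepsilon C) < \tr h(B A_\varepsilon B)$, contradicting the hypothesis. Hence $B^2 \leq C^2$ must hold.

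The main obstacle — or rather the point that needs the most care — is the limiting argument in the reverse direction: one must justify that $\tr h(\sqrt{A_\varepsilon}\, B^2\sqrt{A_\varepsilon}) \to \tr h((x\otimes x)B^2(x\otimes x))$, which amounts to noting that $A_\varepsilon \to x\otimes x$ in norm, that $X \mapsto \sqrt X$ and $X \mapsto h(X)$ are norm-continuous on bounded subsets of $B(H)^+$ (again by uniform polynomial approximation on compact spectral sets, as in the proof of Lemma~\ref{L:1}), and that $\tr$ is norm-continuous. The role of the hypothesis $h(0)=0$ is precisely to ensure the rank-deficiency of the limiting operator does not leave a spurious constant term. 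A small alternative: instead of perturbing $x\otimes x$, one could work directly with a fixed $A$ of the form $x\otimes x + P^\perp$ or any strictly positive operator whose "weight" on $x$ dominates, but the $\varepsilon I$ perturbation is the cleanest way to stay inside $B(H)^{++}$ while approaching the rank-one situation.
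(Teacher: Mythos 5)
Your proposal is correct and follows essentially the same route as the paper: the forward direction via Lemma~\ref{L:1}, conjugation by $\sqrt{A}$, and monotonicity of trace functions, and the reverse direction by approximating the rank-one projection $x\otimes x$ with positive definite operators, passing to the limit by continuity (with $h(0)=0$ killing the kernel contribution), and invoking strict monotonicity of $h$ to get $\|Bx\|^2\leq\|Cx\|^2$ for all unit vectors. The only cosmetic differences are that you phrase the reverse direction as a contrapositive and route the limit through Lemma~\ref{L:1} rather than computing $BA_\varepsilon B\to B(x\otimes x)B=(Bx)\otimes(Bx)$ directly.
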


\begin{proof}
First we assume that $B^2 \leq C^2$ holds. Then for all $A \in B(H)^{++}$ we have $\sqrt{A}B^2\sqrt{A} \leq \sqrt{A}C^2\sqrt{A}$. The monotonicity of trace functions (see \cite[2.10. Theorem]{EC}) implies that
\[
\tr h\left(\sqrt{A}B^2\sqrt{A}\right) \leq \tr h\left(\sqrt{A}C^2\sqrt{A}\right).
\]
By Lemma~\ref{L:1} we deduce that for all $A \in B(H)^{++}$ the inequality $\tr h(BAB) \leq \tr h(CAC)$ is valid.

As for the converse statement, first observe that any rank-one projection can be approximated by a sequence of positive definite operators in the operator norm topology. So, assuming 
\[
\tr h(BAB) \leq \tr h(CAC)\quad (A \in B(H)^{++}),
\]
by the continuity of the function $h$ we obtain that
\[
\tr h(BPB) \leq \tr h(CPC)
\]
holds for every rank-one projection $P$ on $H$. Choosing any unit vector $x \in H$ and considering $P=x\otimes x$ we easily get  
\[
h\left(\|Bx\|^2\right) \leq h\left(\|Cx\|^2\right).
\]
By the strict monotonicity of $h$ we infer $\|Bx\|^2 \leq \|Cx\|^2$ for  every unit vector $x\in H$ which implies $B^2\leq C^2$. This completes the proof of the lemma.
\end{proof}

We are now in a position to give the proof of our second proposition.

\begin{proof}[Proof of Proposition~\ref{P}]
In the proof we apply the main ideas of the proof of \cite[Lemma 13]{Muletal13}.
Pick any $B \in B(H)^+$. With respect to the orthogonal decomposition $H=\supp B\oplus (\supp B)^\perp$ we can write
\[
B=\left(
\begin{array}{ccc}
B_0 & 0\\
0 & 0\\
\end{array}
\right)
\]
where $B_0=B|_{\supp B}$.
We choose an arbitrary $A \in B(H)^+$. With respect to the same orthogonal decomposition we have 
\[
A=\left(
\begin{array}{ccc}
A_0 & C\\
C^* & A_1\\
\end{array}
\right)
\]
where $A_0, C$ and $A_1$ are appropriate operators.
Easy computation gives 
\begin{equation} \label{formula}
f(B+\varepsilon I)Af(B+\varepsilon I)=\left(
\begin{array}{ccc}
f(B_0+\varepsilon I)A_0 f(B_0+\varepsilon I) & f(\varepsilon)f(B_0+\varepsilon I)C\\
f(\varepsilon)C^ *f(B_0+\varepsilon I) & f^2(\varepsilon)A_1\\
\end{array}
\right).
\end{equation} 
In this displayed formula as well as below, $I$ denotes the identity operator not necessarily on $H$ but on an appropriate subspace of it.
If $\lim_{\varepsilon \searrow 0}f(\varepsilon)=0$, we deduce from \eqref{formula} that 
\[
\lim\limits_{\varepsilon \searrow 0} f(B+\varepsilon I)Af(B+\varepsilon I) =
\left(
\begin{array}{ccc}
f(B_0)A_0 f(B_0) & 0\\
0 & 0\\
\end{array}
\right).
\]
By the continuity of $g$ and the property $g(0)=0$ it follows that
\[
\lim\limits_{\varepsilon \searrow 0} g(f(B+\varepsilon I)Af(B+\varepsilon I)) =
\left(
\begin{array}{ccc}
g(f(B_0)A_0 f(B_0)) & 0\\
0 & 0\\
\end{array}
\right)
\]
and we easily obtain \eqref{dab1}. 

In the case where $\lim_{\varepsilon \searrow 0}f(\varepsilon)=\infty$, under the assumption $\supp A \subset \supp B$ we have $C=0, A_1=0$ and it follows that the limit \eqref{E:limit} exists and we have
\[
D'_{f,g}(A\Vert B)=\tr g\left(f(B|_{\supp B})P_BAP_Bf(B|_{\supp B})\right).
\]
Assume now that $\supp A \not\subset \supp B$. Then there exists a unit vector $v \in H$ such that $v \in \ker B $ and $v \not\in \ker A$. With respect to the decomposition $\supp B \oplus \ker B$ of $H$, the vector $v$ is of the form 
\[
v = \left(
\begin{matrix}{}
0 \\
z
\end{matrix} \right)
\]
and
\begin{equation} \label{condition}
Av=\left(
\begin{array}{ccc}
A_0 & C\\
C^* & A_1\\
\end{array}
\right)\left(
\begin{matrix}{}
0 \\
z
\end{matrix} \right) = \left(
\begin{matrix}{}
Cz \\
A_1z
\end{matrix} \right) \neq \left(
\begin{matrix}{}
0 \\
0
\end{matrix} \right)
\end{equation}
holds. 
We claim $A_1 z \neq 0$. Assume on the contrary that $A_1z = 0$. Since $A \in B(H)^+$, for arbitrary $w\in \supp B$ we have
\[
\begin{gathered}
0 \leq \left\langle\left(
\begin{array}{ccc}
A_0 & C\\
C^* & A_1\\
\end{array}
\right)\left(
\begin{matrix}{}
w \\
z
\end{matrix} \right) , \left(
\begin{matrix}{}
w \\
z
\end{matrix} \right) \right\rangle =  \left\langle A_0 w,w\right\rangle + 2\Re\left\langle Cz,w\right\rangle.
\end{gathered}
\]
Hence for all $t \in \mathbb{R}$ and for an arbitrary $w\in \supp B$ we have
\[
0 \leq t^2 \left\langle A_0 w,w\right\rangle + 2t\Re\left\langle Cz,w\right\rangle
\]
which implies that for every $w\in \supp B$ the equality
\[
2\Re\left\langle Cz,w\right\rangle = 0
\]
holds. From this we deduce $Cz=0$ which contradicts \eqref{condition}. Therefore, we have $A_1z\neq 0$.

Denote by $Q$ the projection onto the subspace spanned by $v$,  i.e. let $Q=v\otimes v$. Recall $v\in \ker B$. We compute
\[
QAQ=\langle Av,v\rangle Q=\langle A_1 z,z\rangle Q.
\]
On the other hand, we have
\[
f(B+\varepsilon I)^2=
\left(
\begin{array}{ccc}
f(B_0+\varepsilon I)^2 & 0\\
0 & f^2(\varepsilon)I\\
\end{array}
\right)
\]
Since $Q$ projects onto a subspace of $\ker B$, it then follows that
\[
f(B+\varepsilon I)^2\geq f(\varepsilon)^2 Q.
\]
Therefore, applying Lemma \ref{lemma:1} we deduce
\[
\tr g(f(B+\varepsilon I)Af(B+\varepsilon I))\geq \tr g (f(\varepsilon)^2 QAQ)=g\left(f^2(\varepsilon)\left\langle A_1 z, z \right\rangle \right).
\]
Observe that the kernel of $A_1$ is the same as the kernel of its square root which implies that $\langle A_1 z, z \rangle$ is a positive real number.
By the properties of $f,g$ we see that in the latter displayed formula the right hand side quantity tends to infinity as $\varepsilon$ tends to zero. This completes the proof of the proposition.
\end{proof}

We can now turn to the proofs of our main results. Observe that by Proposition~\ref{P}, the quantity $D'_{f,g}$ in Theorems~\ref{T:2},\ref{T:3} is well-defined. We denote by $P_1(H)$ the set of all rank-one projections on $H$.


\begin{proof}[Proof of Theorem~\ref{T:2}] 
Assume the conditions in the statement hold and 
$\phi\colon \S\rightarrow \S$ is a transformation satisfying
\[
D'_{f,g}(\phi(A)\Vert\phi(B))=D'_{f,g}(A\Vert B)\quad (A,B\in \S).
\]
First we show that $\phi$ preserves the orthogonality in both directions,
i.e. it satisfies
\[
 \phi(A)\phi(B)=0 \Longleftrightarrow AB=0
\]
for any $A,B\in\S$. To see this we need the following characterization of
orthogonality. By the formula \eqref{dab1} and by the properties of $f,g$ it easily follows that for any $A,B \in \S$ we have
\begin{equation*}\label{ort}
AB=0\ \Longleftrightarrow\ D'_{f,g}(A\Vert B)=0.
\end{equation*}
Since $\phi$ preserves the quantity $D'_{f,g}(.\Vert .)$, it then follows that $\phi$ preserves the orthogonality in both directions.

Apparently, we can characterize the elements of $P_1(H)$ as those operators in $\S$ which belong to
a set of $n$ pairwise orthogonal density operators on $H$. By the orthogonality preserving property of $\phi$ we infer that it maps $P_1(H)$ into itself.
We claim that $\phi$ preserves also the transition probability (the trace of product) on $P_1(H)$. To prove this, let $P, Q\in P_1(H)$ be arbitrary. Applying \eqref{dab1} one can check that 
\[
D'_{f,g}(P\Vert Q)=g\left(f^2(1)\tr PQ\right) 
\]
and similarly
\[
D'_{f,g}(\phi(P)\Vert\phi(Q))=g\left(f^2(1)\tr \phi(P)\phi(Q)\right).
\]
By the injectivity of $g$ it follows that
\[
\tr \phi(P)\phi(Q)=\tr PQ.
\]
This means that the restriction of $\phi$ to $P_1(H)$ preserves the transition probability. The non-bijective version of Wigner's theorem (see e.g. \cite[Theorem 2.1.4]{MB}) describes the structure of all such maps. Since $H$ is finite dimensional, we obtain that there exists either a unitary or an antiunitary operator $U$ on $H$ such that
\[
\phi(P)=UPU^* \quad (P\in P_1(H)).
\]

Consider the transformation $\psi\colon \S\to\S$ defined by $\psi(A)=U^*\phi(A)U$ $(A\in \S)$.
It is clear that this map preserves the quantity $D'_{f,g}(A||B)$ and has the additional
property that it acts as the identity on $P_1(H)$.
Let $A\in\S$ be fixed and $Q\in P_1(H)$ be arbitrary. Using \eqref{dab1} again, we infer
\[
D'_{f,g}(A\Vert Q)=\tr g\left(f^2(1)QAQ\right)
\]
and similarly
\[
D'_{f,g}(\psi(A)\Vert Q)=\tr g\left(f^2(1)Q\psi(A)Q\right).
\]
By the properties of $\psi$ we have
\[
\tr g\left(f^2(1)QAQ\right)=\tr g\left(f^2(1)Q\psi(A)Q\right)
\]
holds for every rank-one projection $Q$ on $H$. Therefore, for every $x \in H$ with $\|x\|=1$ we deduce
\[
g\left(f^2(1)\left\langle Ax,x\right\rangle\right)=g\left(f^2(1)\left\langle \psi(A)x,x\right\rangle\right).
\]
Since $g$ is injective, it follows that
\[
\langle Ax,x\rangle = \langle \psi(A)x,x\rangle
\]
holds for every unit vector $x\in H$ and then we obtain
\[
A=\psi(A)=U^*\phi(A)U \quad (A\in\S).
\]
This completes the proof of the theorem.
\end{proof}

We next present the proof of our second main result.


\begin{proof}[Proof of Theorem~\ref{T:3}] 
The basic ideas of the argument below are close to those of the proof of \cite[Theorem]{MNSz} but there are smaller or bigger differences at many places. Therefore, for the sake of understandability, readability and completeness we present the proof with essentially all details.

Assume the conditions in the statement hold and 
$\phi\colon \S\rightarrow \S$ is a transformation satisfying
\[
D'_{f,g}(\phi(A)\Vert\phi(B))=D'_{f,g}(A\Vert B)\quad (A,B\in \S).
\]
We first show that $\phi$ preserves the rank, i.e. for any $A\in \S$ the rank of $\phi(A)$ equals the rank of $A$. In order to see it, let $A,B\in \S$ be arbitrary. Using \eqref{dab2}, it follows that $D'_{f,g}(A\Vert B)<\infty$ holds if and only if $\supp A\subset \supp B$.
We infer from this that
\[
\supp \phi(A)\subset \supp \phi(B) \Longleftrightarrow \supp A\subset \supp B
\]
next that
\[
\supp \phi(A)= \supp \phi(B) \Longleftrightarrow \supp A= \supp B
\]
and finally that
\begin{equation}\label{supp2}
\supp \phi(A) \subsetneq \supp \phi(B) \Longleftrightarrow \supp A\subsetneq \supp B.
\end{equation}
Observe that the rank of $A$ is $k$ if and only if there is a strictly increasing chain
(with respect to the relation of inclusion) of supports of $n$ density operators on $H$ such that its
$k$th element is $\supp A$. Using this characterization and \eqref{supp2} we see that $\phi$
leaves the rank of operators invariant. In particular, we have
\begin{equation}\label{E:pure}
\phi(P_1(H))\subset P_1(H).
\end{equation}

We next verify that $\phi$ is injective. Let $B,B'\in\S$ and suppose that $\phi(B) = \phi(B')$. For all $P \in P_1(H)$ we have $D'_{f,g}(\phi(P)\Vert \phi(B))=D'_{f,g}(\phi(P)\Vert \phi(B'))$ and by the preserver property of $\phi$ this implies $D'_{f,g}(P\Vert B)=D'_{f,g}(P\Vert B')$. Therefore, for any $P\in P_1(H)$ we have $D'_{f,g}(P\Vert B')< \infty $ if and only if $D'_{f,g}(P\Vert B) < \infty $ and hence we obtain $\supp B = \supp B'$.

Pick any $P\in P_1(H)$ with $\supp P \subset \supp B$ and apply \eqref{dab2} and the preserver property of $\phi$. We easily deduce that for every $x\in H$ with $\|x\|=1$ and $x \in \supp B$
\[
g\left(\|f(B|_{\supp B})x\|^2\right) = g\left(\|f(B'|_{\supp B'})x\|^2\right)
\]
holds. Due to the fact that $g$ is injective we conclude that 
\[
\langle f^2(B)x,x\rangle= \|f(B)x\|^2 = \|f(B')x\|^2= \langle f^2(B)x,x\rangle
\] 
and hence
$ f^2(B|_{\supp B}) = f^2(B'|_{\supp B'})$ is valid on $\supp B=\supp B'$. Since $f^2$ is strictly monotone decreasing we deduce $B=B'$ which proves that $\phi$ is injective.


In the next part of our argument we assume that $H$ is two-dimensional. We claim that for any $B\in\S$ we have
\[
[\min\sigma(B),\max\sigma(B)] \subset [\min\sigma(\phi(B)),\max\sigma(\phi(B))]
\]
meaning that $\phi$ can only enlarge the convex hull of the spectrum of the elements of $\S$.
To verify this property first observe that by \eqref{E:pure} the inclusion above holds for all $B\in P_1(H)$. Now pick a rank-two operator $B\in\S$ and set $\l=\max\sigma(B)\in[1/2,1[$. Then there are mutually orthogonal projections $P,Q\in P_1(H)$ such that $B=\l P+(1-\l)Q$.
Applying \eqref{dab2}, for any $R\in P_1(H)$ we obtain rather easily that
\begin{equation}\label{forma}
D'_{f,g}(R\Vert B)=g \left(f^2(\l)\tr PR+f^2(1-\l)\tr QR\right).
\end{equation}
Since $f$ is strictly monotone decreasing and $g$ is strictly monotone increasing, so  $g \circ f^2$ is strictly monotone decreasing on $\mathbb{R}^+$ and thus $g\left(f^2(\l)\right)\le g\left(f^2(1-\l)\right)$. It follows that as $R$ runs through the set $P_1(H)$, the numbers $\tr PR, \tr QR$ provide all pairs of non-negative reals with sum 1, and hence, using the continuity of $g$, the quantity $D'_{f,g}(R\Vert B)$ runs through the interval $[g\left(f^2(\l)\right),g\left(f^2(1-\l)\right)]$. Similarly, we infer that for any $R\in P_1(H)$ the number $D'_{f,g}(\phi(R)\Vert\phi(B))$ belongs to $[g\left(f^2(\mu)\right),g\left(f^2(1-\mu)\right)]$, where $\mu=\max\sigma(\phi(B))$. By the preserver property of $\phi$ we obtain that
\[
g\left(f^2(\mu)\right)\le g\left(f^2(\l)\right)\le g\left(f^2(1-\l)\right)\le g\left(f^2(1-\mu)\right).
\]
Due to the fact that $g\circ f^2$ is strictly monotone decreasing on $\mathbb{R}^+$ this implies
\[
\min\sigma(\phi(B))\le\min\sigma(B)\le\max\sigma(B)\le\max\sigma(\phi(B))
\]
which verifies our claim.


In the most crucial part of the proof which follows we show that $\phi\left(I/2\right)=I/2$. Assume on the contrary that there is a number $\l_1\in]1/2,1[$ and mutually orthogonal projections $P_1,Q_1\in P_1(H)$ for which
\begin{equation}\label{E:it1}
\phi\left(\frac{1}{2}I\right)=\l_1P_1+(1-\l_1)Q_1.
\end{equation}
By \eqref{forma} for any $R\in P_1(H)$ we have  $D'_{f,g}\left(R\left\Vert I/2\right.\right)=g(f^2\left(1/2\right))$ and then we deduce that
\begin{equation*}
\begin{gathered}
g\left(f^2\left(\frac{1}{2}\right)\right)=D'_{f,g}\left(\phi(R)\left\Vert\phi\left(\frac{1}{2}I\right)\right.\right)= \\
g\left(f^2(\l_1)\tr P_1\phi(R)
+f^2(1-\l_1)\tr Q_1\phi(R)\right).
\end{gathered}
\end{equation*}
Since $g$ is injective, we have
\begin{equation}\label{E:a}
f^2\left(\frac{1}{2}\right)=f^2(\l_1)\tr P_1\phi(R)
+f^2(1-\l_1)\tr Q_1\phi(R).
\end{equation}
As $1=\tr P_1\phi(R)+\tr Q_1\phi(R)$ holds, this gives us that $f^2\left(1/2\right)$ is a convex combination of $f^2(\l_1)$ and $f^2(1-\l_1)$. Since these latter numbers are different ($f^2$ is strictly monotone decreasing), we infer that $\tr P_1\phi(R)$ has the same value for any $R\in P_1(H)$ and the same holds for $\tr Q_{1}\phi(R)$, too. We next prove that
\begin{equation}\label{E:valami}
\tr P_1\phi(R)>\tr Q_1\phi(R).
\end{equation}
Due to the
strict convexity of $f$ we obtain $f^2$ is also strictly convex. Using that property and the fact that $f^2$ is strictly monotone decreasing, referring to \eqref{E:a} one can verify that $\tr P_1\phi(R)>1/2$ and then obtain $\tr P_1\phi(R)>\tr Q_1\phi(R)$. Indeed, in any representation of $f^2\left(1/2\right)$ as a convex combination of $f^2(t)$ and $f^2(1-t)\ (t\in]1/2,1[)$, the coefficient of the former term is necessarily greater than the coefficient of the latter one.

It follows from what we have observed above that when $R$ runs through the set $P_1(H)$, the number $\vartheta=\tr P_1\phi(R)$  remains constant, and since $f^2$ is clearly injective, $\vartheta$ is different from the numbers $0,1$. By \eqref{E:a} we have
\begin{equation}\label{teta}
\vartheta f^2(\l_1)+(1-\vartheta)f^2(1-\l_1)=f^2\left(\frac{1}{2}\right).
\end{equation}

Next let us consider $\phi\left(\phi\left(I/2\right)\right)$. We have
\[
\phi\left(\phi\left(\frac{1}{2}I\right)\right)=\lambda_2 P_2+(1-\l_2)Q_2
\]
for some $1/2\le\l_2<1$ and mutually orthogonal projections $P_2,Q_2$ in $P_1(H)$. In fact, as $\phi$ can only enlarge the convex hull of the spectrum and $\lambda_1>1/2$, it follows that $\lambda_2\geq \lambda_1>1/2$. Pick an arbitrary rank-one projection $R$ on $H$ and set $R_2=\phi(\phi(R))$. Since
$\phi$ preserves $D'_{f,g}(.\Vert.)$, similarly to \eqref{E:a} 
we have
\begin{equation*}
\begin{gathered}
g\left(f^2\left(\frac{1}{2}\right)\right)=
D'_{f,g}\left(\phi(\phi(R)) \left\Vert\phi\left(\phi\left(\frac{1}{2}I\right)\right.\right)\right) = \\
D'_{f,g}(R_2\Vert\lambda_2P_2+(1-\l_2)Q_2) = \\
g\left(f^2(\l_2)\tr P_2R_2+f^2(1-\l_2)\tr Q_2R_2\right).
\end{gathered}
\end{equation*}
This gives us that
\begin{equation}\label{E:5}
f^2\left(\frac{1}{2}\right)=
f^2(\l_2)\tr P_2R_2+f^2(1-\l_2)\tr Q_2R_2.
\end{equation}
Here $\lambda_2>1/2$ is fixed. Since the pair $\tr P_2R_2,\tr Q_2R_2$ of non-negative real numbers has sum 1, it follows just as above that the numbers $\tr P_2R_2$ and $\tr Q_2R_2$ are also fixed, they do not change when $R$ varies. Moreover, by the strict convexity of $f^2$ we also necessarily have
\begin{equation}\label{E:9}
\tr P_2R_2>\tr Q_2R_2.
\end{equation}

Now choose unit vectors $e$ and $f$ from the ranges of $P_1$ and $Q_1$, respectively. Consider a unit vector from the range of $P_2$. Let $\xi,\eta$ be its coordinates with respect to the basis $\{e,f\}$. It is easy to see that the representing matrix of $P_2$ is
\[
\left(
\begin{matrix}{}
\xi \\
\eta
\end{matrix} \right)
\left(
\begin{matrix}{}
\overline{\xi} \\
\overline{\eta}
\end{matrix} \right)^t,
\]
where $^t$ denotes the transposition. Moreover, since $R_2$ is a rank-one projection which is the image (under $\phi$) of a rank-one projection, its matrix representation is of the form
\[
\left(
\begin{matrix}{}
\vartheta & \varepsilon \sqrt{\vartheta(1-\vartheta)}\\
\overline{\varepsilon} \sqrt{\vartheta(1-\vartheta)} & 1-\vartheta
\end{matrix} \right),
\]
where $\vartheta$ is the same as in $\eqref{teta}$, and $\varepsilon\in\mathbb{C}$ with $|\varepsilon|=1$ varies as $R$ varies. We have
\[
\tr P_2R_2=
\tr \left[ \left(
\begin{matrix}{}
\xi  \\
\eta
\end{matrix} \right)
\left(
\begin{matrix}{}
\overline{\xi } \\
\overline{\eta}
\end{matrix} \right)^t
\left(
\begin{matrix}{}
\vartheta & \varepsilon\sqrt{\vartheta(1-\vartheta)}\\
\overline{\varepsilon}\sqrt{\vartheta(1-\vartheta)} & 1-\vartheta
\end{matrix} \right)
\right] .
\]
Elementary computations show that the latter quantity equals
\[
\begin{gathered}
\vartheta\xi\overline{\xi}+\sqrt{\vartheta(1-\vartheta)}\varepsilon\overline{\xi}\eta+\sqrt{\vartheta(1-\vartheta)}\overline{\varepsilon} \xi\overline{\eta}+(1-\vartheta)\eta\overline{\eta}=\\
\vartheta|\xi|^2+(1-\vartheta)|\eta|^2+2\sqrt{\vartheta(1-\vartheta)}\Re(\varepsilon\overline{\xi}\eta).
\end{gathered}
\]
As we have already showed, the value of $\tr P_2R_2$ does not change when $R$ varies and $\vartheta$ is also constant. Therefore, we obtain that the value of
\[
\vartheta|\xi|^2+(1-\vartheta)|\eta|^2+2\sqrt{\vartheta(1-\vartheta)}\Re (\varepsilon\overline{\xi}\eta)
\]
is the same for infinitely many values of $\varepsilon$ (by the injectivity of $\phi$ we see that $R_2$ runs through a set of continuum cardinality, so there is such a large set for the values of $\varepsilon$, too). It follows that $\Re(\varepsilon\overline{\xi}\eta)$ is the same for infinitely many values of $\varepsilon$ which clearly implies that $\overline{\xi}\eta=0$. Therefore, the column vector
\[
\left(
\begin{matrix}{}
\xi \\
\eta
\end{matrix} \right)
\]
is a scalar multiple of
\[
\left(
\begin{matrix}{}
1 \\
0
\end{matrix} \right)
\text{ or }
\left(
\begin{matrix}{}
0 \\
1
\end{matrix} \right).
\]
Obviously, this can happen only when $P_2=P_1$ or $P_2=Q_1$. Using the fact that $R_2$ is the image of a rank-one projection under $\phi$, it follows from \eqref{E:valami} that
\begin{equation}\label{E:8}
\tr P_1R_2>\tr Q_1R_2.
\end{equation}
If $P_2=Q_1$, then $P_1=Q_2$ and due to \eqref{E:9} we have
\[
\tr Q_1R_2>\tr P_1R_2
\]
which contradicts \eqref{E:8}. Therefore, the possibility $P_2=Q_1$ is ruled out and, consequently, we have $P_2=P_1$ and $Q_2=Q_1$. Thus we obtain
\begin{equation}\label{E:it2}
\phi\left(\phi\left(\frac{1}{2}I\right)\right)=\l_2P_1+(1-\l_2)Q_1.
\end{equation}
By \eqref{E:5} we have
\[
f^2(\lambda_2)\tr P_1R_2+f^2(1-\l_2)\tr Q_1R_2=f^2\left(\frac{1}{2}\right).
\]
On the other hand, referring to the sentence preceding \eqref{teta} we see that $\tr P_1R_2=\vartheta$ and $\tr Q_1R_2=1-\vartheta$, thus it follows that
\begin{equation}\label{E:l2}
\vartheta f^2(\l_2)+(1-\vartheta)f^2(1-\l_2)=f^2\left(\frac{1}{2}\right).
\end{equation}
We assert that the equation
\begin{equation}\label{E:f3}
\vartheta f^2(t)+(1-\vartheta)f^2(1-t)=f^2\left(\frac{1}{2}\right)
\end{equation}
has at most two solutions in $]0,1[$. Indeed, consider the function
\[
t\mapsto \vartheta f^2(t)+(1-\vartheta)f^2(1-t)\quad(t\in]0,1[).
\]
Since $f^2$ is strictly convex, the same holds for this function, too.
Therefore it cannot take the same values at three different places and 
hence \eqref{E:f3} does not have three different solutions in $]0,1[$. But by \eqref{teta} and \eqref{E:l2} $\l_1,\l_2$ and clearly $1/2$ too are solutions. Since $\l_2\ge\l_1>1/2$, it then follows that $\l_2=\l_1$ and referring to \eqref{E:it1} and \eqref{E:it2} we see that $\phi\left(\phi\left(I/2\right)\right)=\phi\left(I/2\right)$. Since $\phi$ is injective, this gives us that $\phi\left(I/2\right)=I/2$. Therefore, $\phi$ sends $I/2$ to itself.


Now let $I/2\ne A\in\S$ be a rank-two operator and denote by $\l\in]1/2,1[$ its maximal eigenvalue. We assert that $\sigma(\phi(A))=\sigma(A)$. Let $h\colon]0,1[\to\mathbb{R}$ be the function defined by
\[
h(t)=g\left(f^2\left(\frac{1}{2}\right)t\right)+g\left(f^2\left(\frac{1}{2}\right)(1-t)\right)\quad(t\in]0,1[).
\]
Using the formula \eqref{dab2} we obtain 
\[
D'_{f,g}\left(A\left\Vert\frac{1}{2}I\right.\right)=h(\l)
\]
and, similarly, 
\[
D'_{f,g}\left(\phi(A)\left\Vert\frac{1}{2}I\right.\right)=h(\l'),
\] 
where $\l'=\max\sigma(\phi(A))>1/2$. Since $\phi$ preserves $D'_{f,g}(.\Vert.)$ and sends $I/2$ to itself, it follows that $D'_{f,g}\left(\phi(A)\left\Vert I/2\right.\right)=D'_{f,g}\left(A\left\Vert I/2\right.\right)$, and hence that $h(\l)=h(\l')$. If $g$ is assumed to be strictly convex (the case when $g$ is strictly concave can be handled in a similar way), then we have that $h$ is strictly convex and symmetric with respect to the middle point $1/2$ of its domain. By elementary properties of convex functions this implies that the restriction of $h$ to $]1/2,1[$ is strictly monotone increasing. We necessarily obtain that $\l=\l'$ and this yields that the spectrum of $A$ coincides with that of $\phi(A)$. Therefore, $\phi$ is spectrum preserving.


Select mutually orthogonal projections $P,Q\in P_1(H)$ and pick a number $\l \in ]1/2,1[$. Consider the operator $B=\l P+(1-\l)Q$.
By the spectrum preserving property of $\phi$ we can choose another pair $P',Q'\in P_1(H)$ of mutually orthogonal projections such that $\phi(B)=\l P'+(1-\l)Q'$. We have learnt before (see the discussion around \eqref{forma}) that when $R$ runs through the set of all rank-one projections, the quantity $D'_{f,g}(R\Vert B)$ runs through the interval $[g\left(f^2(\l)\right),g\left(f^2(1-\l)\right)]$.
Using the equation \eqref{forma} we easily see that
$D'_{f,g}(R\Vert B)=g\left(f^2(\lambda)\right)$ if and only if $\tr PR=1$ which holds exactly when $R=P$.
Therefore, we obtain
\[
\begin{gathered}
R=P \Longleftrightarrow D'_{f,g}(R\Vert B)=g\left(f^2(\lambda)\right) 
\\ 
\Longleftrightarrow D'_{f,g}(\phi(R)\Vert \phi(B))= g\left(f^2(\lambda)\right)
\\
\Longleftrightarrow D'_{f,g}(\phi(R)\Vert \lambda P'+(1-\lambda) Q')
=g\left(f^2(\lambda)\right) 
\\
\Longleftrightarrow \phi(R)=P'.
\end{gathered}
\]
This gives us that $\phi(P)=P'$ and we similarly obtain $\phi(Q)=Q'$. Consequently, $\phi$ preserves the orthogonality between rank-one projections and we also have
\begin{equation}\label{E:17}
\phi(B)=\phi(\lambda P+(1-\l)Q)
=\lambda\phi(P)+(1-\l)\phi(Q).
\end{equation}


Next, we show that $\phi$ preserves the nonzero transition probabilities between rank-one projections.
Let $P$ and $R$ be different rank-one projections which are not orthogonal to each other.
Choose a rank-one projection $Q$ which is orthogonal to $P$.
Pick $\lambda \in ]1/2,1[$. On the one hand, we have
\[
D'_{f,g}(R\Vert \lambda P+(1-\lambda)Q)=g\left(f^2(\l)\tr PR  +f^2(1-\l)\tr QR \right)
\]
and on the other hand, by \eqref{E:17}, we compute
\[
\begin{gathered}
D'_{f,g}(R\Vert \lambda P+(1-\l) Q)= D'_{f,g}(\phi(R)\Vert \lambda \phi(P)+(1-\l) \phi(Q))\\=g\left(f^2 (\lambda) \tr \phi(P)\phi(R) +f^2(1-\l) \tr \phi(Q)\phi(R)\right).
\end{gathered}
\]
Comparing the right-hand sides and using the injectivity of $g$, we infer
\[
\tr PR=\tr \phi(P)\phi(R).
\]
Consequently, $\phi$ preserves the transition probability between rank-one projections.

Above we have supposed that $H$ is two-dimensional. Assume now that $H$ is an arbitrary finite dimensional Hilbert space and $\phi:S(H)\to S(H)$ is a transformation which preserves the quantity $D'_{f,g}(.\Vert.)$. We show that $\phi$ preserves the transition probability between rank-one projections in this case too. In fact, we can reduce the general case to the previous one. To see this, first let $H_2$ be a two-dimensional subspace of $H$ and $A_0\in S(H)$ be such that $\supp A_0=H_2$. Set $H_2'=\supp \phi(A_0)$. Since $\phi$ preserves the rank, $H_2'$ is also two-dimensional. By what we have learnt at the beginning of the proof, $\phi$ maps any element of $S(H)$ whose support is included in $H_2$ to an element of $S(H)$ whose support is included in $H_2'$. In that way $\phi$ gives rise to a transformation $\phi_0 :S(H_2) \to S(H_2')$ which preserves the quantity $D'_{f,g}(.\Vert.)$. Consider a unitary operator $V:H_2'\to H_2$. The transformation $V\phi_0(.)V^*$ maps $S(H_2)$ into itself and preserves the quantity $D'_{f,g}(.\Vert.)$. We have already seen that such a transformation necessarily preserves the transition probability between rank-one projections which implies that the same holds for $\phi_0$ as well. Since for any two rank-one projections $P,Q$ there exists a rank-two element $A_0\in S(H)$ such that $\supp P,\supp Q\subset \supp A_0$, it follows that we have
\[
\tr PQ=\tr \phi(P)\phi(Q).
\]

By the non-bijective version of Wigner's theorem we infer that there is either a unitary or an antiunitary operator $U$ on $H$ such that
\[
\phi(P)=UPU^*\quad(P\in P_1(H)).
\]
Define the map $\psi\colon\S\to\S$ by $\psi(A)=U^*\phi(A)U\ (A\in\S)$. It is clear that $\psi$ preserves $D'_{f,g}(.\Vert.)$ and it acts as the identity on $P_1(H)$. Let $A\in \S$. Since $\psi$ leaves the quantity $D'_{f,g}(.\Vert.)$ invariant, it preserves the
inclusion between the supports of elements of $\S$ (see the first part of the proof). This implies that for every rank-one projection
$P$ on $H$ we have
\[
\supp P \subset \supp A \Longleftrightarrow \supp P \subset \supp\psi(A).
\]
We easily obtain that $\supp A=\supp\psi(A)$. Let $P$ be an arbitrary rank-one projection which satisfies $\supp P \subset\supp A=\supp \psi(A)$.
Using \eqref{dab2} and the equality $D'_{f,g}(P\Vert \psi(A))=D'_{f,g}(P\Vert A)$ we deduce that for any $x \in \supp A$ with $\|x\|=1$ the equation
\[
 g\left(\|f(\psi(A)|_{\supp A})x\|^2\right)= g\left(\|f(A|_{\supp A})x\|^2\right).
\]
holds. Just as at the end of the proof of Theorem~\ref{T:2} it follows that $f^2(\psi(A)|_{\supp A})$ equals $f^2(A|_{\supp A})$.
Using the injectivity of $f^2$ we can infer that $\psi(A)=A$ and next that $\phi(A)=UAU^*$. This completes the proof of the theorem.
\end{proof}

Finally, we present the proof of our last result.

\begin{proof}[Proof of Theorem~\ref{T:4}] 
As a consequence of Lemma \ref{lemma:1}, by the preservation of $D_\alpha(.||.)$ under the transformation $\phi$ we infer that the following equivalences hold
\[
\begin{gathered}
B^\frac{1-\alpha}{\alpha} \leq C^\frac{1-\alpha}{\alpha}  \\\Longleftrightarrow 
\tr \left(B^\frac{1-\alpha}{2\alpha}AB^\frac{1-\alpha}{2\alpha}\right)^\alpha \leq 
\tr \left(C^\frac{1-\alpha}{2\alpha}AC^\frac{1-\alpha}{2\alpha}\right)^\alpha \quad (A \in B(H)^{++})\\ \Longleftrightarrow 
\tr \left(\phi(B)^\frac{1-\alpha}{2\alpha}\phi(A)\phi(B)^\frac{1-\alpha}{2\alpha}\right)^\alpha \leq 
\tr \left(\phi(C)^\frac{1-\alpha}{2\alpha}\phi(A)\phi(C)^\frac{1-\alpha}{2\alpha}\right)^\alpha (A \in B(H)^{++})\\ \Longleftrightarrow
\phi(B)^\frac{1-\alpha}{\alpha} \leq \phi(C)^\frac{1-\alpha}{\alpha}.
\end{gathered}
\]
This implies that
\[
B \leq C  \Longleftrightarrow  \phi(B^\frac{\alpha}{1-\alpha})^\frac{1-\alpha}{\alpha} \leq \phi(C^\frac{\alpha}{1-\alpha})^\frac{1-\alpha}{\alpha}
\]
is valid for any $B,C \in B(H)^{++}$.
We conclude that the bijective map $\psi\colon B(H)^{++} \rightarrow B(H)^{++}$ defined by 
\[
\psi(X)= \phi(X^\frac{\alpha}{1-\alpha})^\frac{1-\alpha}{\alpha} \quad (X\in B(H)^{++})
\] 
is an order automorphism of $B(H)^{++}$. The structure of such transformations is described in \cite{MOrd}. It follows from \cite[Theorem 1]{MOrd} that $\psi$ is of the form
\[
\psi(X)=TXT^*\quad (X\in B(H)^{++})
\]
where $T$ is an invertible linear or conjugate-linear operator on $H$. By the definition of $\psi$ we have
\begin{equation} \label{phiform}
\phi(X)=\left(TX^\frac{1-\alpha}{\alpha}T^*\right)^\frac{\alpha}{1-\alpha} \quad (X\in B(H)^{++}).
\end{equation}
Consider the polar decomposition $T=U| T|$ where $U$ is a unitary or antiunitary operator on $H$. We apparently have
\[
\phi(X)=\left(U|T|X^\frac{1-\alpha}{\alpha}|T|U^*\right)^\frac{\alpha}{1-\alpha}=U\left(|T|X^\frac{1-\alpha}{\alpha}|T|\right)^\frac{\alpha}{1-\alpha} U^*\quad (X\in B(H)^{++}).
\]
Since the unitary as well as antiunitary similarity transformations are clearly invariant under $D_\alpha(.\Vert .)$, without serious loss of generality we can and do assume that in \eqref{phiform} we have $T \in B(H)^{++} $. Our aim now is to show $T$ is a scalar multiple of the identity. Using the preserver property of $\phi$ and \eqref{phiform}, we deduce that
\begin{equation}\label{E:M2} 
\begin{gathered}
\frac{1}{\tr \left(TA^{\frac{1-\alpha}{\alpha}}T\right)^{\frac{\alpha}{1-\alpha}}}  
\tr \left(\left(TB^{\frac{1-\alpha}{\alpha}}T\right)^{\frac{1}{2}} 
\left(TA^{\frac{1-\alpha}{\alpha}}T\right)^{\frac{\alpha}{1-\alpha}}
\left(TB^{\frac{1-\alpha}{\alpha}}T\right)^{\frac{1}{2}}\right)^\alpha \\
= \frac{1}{\tr A} \tr \left(B^\frac{1-\alpha}{2\alpha}AB^\frac{1-\alpha}{2\alpha}\right)^\alpha
\end{gathered}
\end{equation}
holds for all $A,B\in B(H)^{++}$. 
Let $B=T^{\frac{-2\alpha}{1-\alpha}}$ and $A=I$. We obtain from \eqref{E:M2} that
\[
\frac{1}{\tr T^\frac{2\alpha}{1-\alpha}} 
\tr T^{\frac{2\alpha^2}{1-\alpha}}=\frac{1}{\tr I} \tr T^{-2\alpha}
\]
or, equivalently,
\[
(\tr I)  
(\tr T^{\frac{2\alpha^2}{1-\alpha}})= (\tr T^{-2\alpha}) ({\tr T^\frac{2\alpha}{1-\alpha}}).
\]
Let $t_1, \ldots, t_n$ be the eigenvalues of the positive invertible operator $T$ listed in decreasing order. By the last displayed formula, for the finite sequences $x_k=t_k^{-2\alpha}$, $y_k=t_k^{\frac{2\alpha}{1-\alpha}}$ $(k=1, \ldots, n)$ we have
\[
\frac{\sum_{k=1}^n x_k y_k}{n}=\frac{\sum_{k=1}^n x_k}{n} \frac{\sum_{k=1}^n y_k}{n}.
\]
Depending on $\alpha >1$ or $\alpha <1$, the finite sequences $x_1, \ldots, x_n$ and $y_1, \ldots, y_n$ are either similarly ordered or oppositely ordered. By Tchebychef's inequality (see e.g. 2.17. in \cite{HLP}) it follows that either the $x_k$'s or the $y_k$'s are equal. In either case we have the $t_k$'s are equal implying that $T$ is a positive constant multiple of the identity. This completes the proof of the theorem. 
\end{proof}

\bibliographystyle{amsplain}

\end{document}